\documentclass{article}

\usepackage[twoside,
paperwidth=210mm,
paperheight=297mm,
textheight=622pt,
textwidth=468pt,
centering,
headheight=50pt,
headsep=12pt,
footskip=18pt,
footnotesep=24pt plus 2pt minus 12pt,
columnsep=2pc]{geometry}

\usepackage[auth-sc]{authblk}

\usepackage{mathtools}

\allowdisplaybreaks

\usepackage{amssymb}
\usepackage[mathscr]{eucal}

\usepackage{mismath}

\usepackage{hyperref}

\usepackage{bm} 

\usepackage{graphicx}
\graphicspath{{figs/}}

\usepackage{tikz}
\usetikzlibrary{math}
\usetikzlibrary{calc}

\usepackage{subcaption}

\usepackage{multirow}

\usepackage{makecell}
\setcellgapes{2pt}

\usepackage{booktabs}

\usepackage{siunitx}
\usepackage{algorithm}
\usepackage{algpseudocode}

\usepackage{relsize}

\newcommand{\SubplotTag}[1]{\textbf{\small \textsf{#1}}}

\DeclareMathOperator{\Div}{div}

\DeclareMathOperator{\Image}{im}
\DeclareMathOperator{\Card}{card}

\DeclareMathOperator{\Supp}{supp}
\DeclareMathOperator{\Int}{int}
\DeclareMathOperator{\Diam}{diam}

\DeclarePairedDelimiter{\RoundBrackets}{(}{)}
\DeclarePairedDelimiter{\CurlyBrackets}{\{}{\}}
\DeclarePairedDelimiter{\SquareBrackets}{[}{]}

\newcommand{\mathdefault}[1][]{}

\usepackage[skins]{tcolorbox}
\newtcolorbox{justabox}[2][]{%
  enhanced,
  attach boxed title to top center={yshift=-3mm,yshifttext=-1mm},
  colframe=blue!75!black,
  colbacktitle=red!80!black,
  fonttitle=\bfseries,
  title=#2,#1
}

\usepackage{wasysym}

\usepackage{amsthm}

\usepackage[capitalise]{cleveref}

\newtheorem{theorem}{Theorem}[section]
\newtheorem{lemma}[theorem]{Lemma}

\theoremstyle{definition}
\newtheorem{definition}{Definition}

\crefname{assumption}{assumption}{assumptions}
\Crefname{assumption}{Assumption}{Assumptions}

\crefname{problem}{problem}{problems}
\Crefname{problem}{Problem}{Problems}
\crefname{equation}{}{}
\Crefname{equation}{}{}

\theoremstyle{remark}

\usepackage{lineno}

\title{Multiscale modeling for problems with high contrast
heterogeneous coefficients by the
CEM-GMsFEM}
\author[1]{Eric T. Chung\thanks{\href{mailto:eric.t.chung@cuhk.edu.hk}{eric.t.chung@cuhk.edu.hk}}}
\author[2]{Yalchin Efendiev}
\author[1]{Xingguang Jin}
\author[3]{Wing Tat Leung}
\author[1]{Changqing Ye\thanks{\href{mailto:changqingye@cuhk.edu.hk}{changqingye@cuhk.edu.hk}}}
\affil[1]{Department of Mathematics, The Chinese University of Hong Kong, Shatin, Hong Kong}
\affil[2]{Department of Mathematics, Texas A \& M University, College Station, TX 77843, USA}
\affil[3]{Department of Mathematics, City University of Hong Kong, Hong Kong}

\begin{document}
\maketitle
\begin{abstract}
  This {review paper} provides a comprehensive overview of the Constrained Energy Minimizing Generalized Multiscale Finite Element Method (CEM-GMsFEM) for solving elliptic PDEs characterized by highly heterogeneous, high-contrast coefficients.
We detail the construction of multiscale basis functions via spectral auxiliary spaces, combined with an oversampling strategy that enables localized computations and guarantees exponential error decay.
Rigorous error estimates are outlined for reference to confirm the method's optimal convergence and robustness. { Numerical simulations are provided to verify the exponential decay property of the multiscale basis functions.} Additionally, we discuss and comment several up-to-date applications of CEM-GMsFEMs.

\end{abstract}

\section{Introduction}
\label{sec:introduction}
Many practical problems motivate the investigation of partial differential equations (PDEs) with inhomogeneous coefficients.
For instance, applications such as Darcy's law in inhomogeneous or fractured media and elasticity systems in composite materials illustrate these challenges.
When the coefficients exhibit particular structures---such as periodicity or randomness---an extensive body of mathematical theory has been developed \cite{Bensoussan2011,DalMaso1993,Jikov1994,Pankov1997,Conca1997,Cioranescu1999,Cioranescu2008,Tartar2009,Shen2018,Armstrong2019}, which underpins multiscale modeling and simulation.
However, general inhomogeneous coefficients, commonly accompanied by high contrast channels, pose a long-standing challenge for traditional methods.
This difficulty arises for two main reasons: channelized structures necessitate the use of fine meshes that dramatically increase the number of degrees of freedom, and high contrast ratios deteriorate the convergence of solvers for the resulting linear algebraic systems.

The heterogeneous nature of coefficients in {model problems} 
motivates the use of multiscale computational methods. Pioneered by Hou and Wu in \cite{Hou1997}, the concept of embedding model information into finite element space construction, known as MsFEMs, has received considerable attention.
One advantage of MsFEMs is that the fine-scale heterogeneity of the coefficients need not be fully resolved by the computational mesh, although the practical implementation relies on a pair of nested meshes.
To mitigate the constraints imposed by boundary conditions during the construction of multiscale basis functions, the oversampling technique was introduced in \cite{Hou1997} and later shown to improve convergence rates \cite{Efendiev2000,Efendiev2009}.
However, when the coefficient fails to satisfy the scale-separation assumption, the accuracy of MsFEMs may deteriorate, as demonstrated by the convergence analyses in \cite{Hou1999,Ye2020,Ming2024}.
To overcome this limitation, Efendiev, Galvis, and Hou proposed the Generalized Multiscale Finite Element Method (GMsFEM) in \cite{Efendiev2013}.
GMsFEMs employ spectral decomposition to reduce the dimensionality of the online space and have been shown to perform well for high-contrast and channel-like coefficient profiles \cite{Chung2014a}.
The first construction of multiscale basis functions that achieve the best theoretical approximation for general $L^\infty$ coefficients was presented by M\aa{}lqvist and Peterseim in \cite{Maalqvist2014}.
This approach, known as the Localized Orthogonal Decomposition (LOD), makes use of quasi-interpolation operators to separate the solution into macroscopic and microscopic components \cite{Altmann2021,Maalqvist2021}.
The subsequent combination of GMsFEMs and LOD resulted in the development of the Constraint Energy Minimizing GMsFEM (CEM-GMsFEM) by Chung, Efendiev, and Leung in \cite{Chung2018}.
The innovation in CEM-GMsFEM lies in the replacement of the quasi-interpolation operators from LOD with element-wise eigenspace projections, along with a relaxed formulation of the energy minimization problems to construct multiscale bases, thus removing the need to solve saddle-point linear systems.

We emphasize that the contrast robustness provided by CEM-GMsFEMs is an essential attribute that should be ``golden rule'' for any multiscale method.
Focusing solely on the heterogeneity of the coefficients is, to some extent, unconvencing for practical applications.
For example, when solving a heterogeneous elliptic equation \( -\Div(\kappa \nabla u) = f \), one can construct a preconditioner using the homogeneous counterpart \( -\Delta u = f \), which ensures rapid convergence of iterative solvers as long as the contrast is moderate.
The point lies in that, efficient solvers for the Poisson equation---such as multigrid, FFT, and fast multipole methods \cite{Gholami2016,Ye2024b}, which typically scale almost linearly with the number of degrees of freedom, are highly optimized in mature software packages.
However, when the contrast ratio is significantly high, these fast solvers may fail to converge, and multiscale methods become a viable alternative.
In fact, several current contrast-robust preconditioners share conceptual similarities with CEM-GMsFEMs, particularly in their use of spectral spaces \cite{Ye2024c,Ye2025}.

The remainder of the paper is organized as follows.
In \cref{sec:anatomy}, we describe the essential modules of CEM-GMsFEMs and provide implementation details.
\Cref{sec:keys} presents the theoretical analysis, outlining the crucial techniques used in obtaining the error estimates.
In \cref{sec:gallery}, we showcase and comment recent applications. \cref{sec:num} demonstrates the exponential decay of the multiscale basis functions. Finally, \cref{sec:conclusion} offers concluding remarks and an outlook on future research directions.

\section{Anatomy of CEM-GMsFEMs}
\label{sec:anatomy}
In this section, we discuss the key components of CEM-GMsFEMs, with particular emphasis on implementation aspects.
The model problem considered is the standard heterogeneous elliptic equation with Dirichlet boundary conditions:
\begin{equation}
  \label{eq:model}
  \left\{
  \begin{aligned}
     & -\Div(\kappa \nabla u) = f, &  & \quad \text{in } \Omega,         \\
     & u = 0,                      &  & \quad \text{on } \partial\Omega,
  \end{aligned}
  \right.
\end{equation}
where $\kappa$ is the diffusion coefficient satisfying the conventional ellipticity condition, and $f$ denotes the source term.
The domain $\Omega$ is assumed to be a bounded Lipschitz domain in $\mathbb{R}^d$ with $d=2,3$, where standard Sobolev spaces are defined.

\subsection{Two-scale nested meshes}
We assume that two partitions $\mathcal{T}_H$ and $\mathcal{T}_h$ over the domain $\Omega$ are given, where $H$ and $h$ are the mesh sizes of $\mathcal{T}_H$ and $\mathcal{T}_h$, respectively.
Those sizes can be characterized by mathematical expressions as
\[
  H\coloneqq \max_{K_H\in\mathcal{T}_H} \Diam(K_H),\quad h\coloneqq \max_{K_h\in\mathcal{T}_h} \Diam(K_h).
\]
The two partitions are nested in the sense that each subdomain $K_H \in \mathcal{T}_H$ is refined into a collection of $K_h \in \mathcal{T}_h$.
At this stage, those concepts have not touched FEMs, and those partitions can be very general and flexible.
For example, $\mathcal{T}_H$ can be a structured mesh, while $\mathcal{T}_h$ can be unstructured.
Starting from a partition that fully resolves the heterogeneity, there are two common approaches to construct these nested partitions:
\begin{enumerate}
  \item The fine partition $\mathcal{T}_h$ is given, and the coarse partition $\mathcal{T}_H$ is generated by coarsening/aggregating the fine partition.
  \item The coarse partition $\mathcal{T}_H$ is provided, and the fine partition $\mathcal{T}_h$ is generated by refining the coarse partition.
\end{enumerate}
For elliptic problems, accuracy is typically satisfactory if the fine partition adequately captures the heterogeneity of the coefficients, making the first approach more commonly used.
However, for more challenging problems, such as high-frequency wave propagation, the oscillations in the solution can be significantly more intricate than the heterogeneity introduced by the coefficients.
In such cases, the second approach becomes equally important.

From the fine partition $\mathcal{T}_h$, we can exercise a particular numerical scheme, commonly FEMs, to solve the problem.
However, the high resolution of the fine partition can lead to prohibitively expensive computational costs.
The core idea of CEM-GMsFEMs is to address this issue by transferring the computational burden, particularly the solving of linear systems, to the coarse partition $\mathcal{T}_H$.
This is accomplished by constructing a set of basis functions on the coarse partition, which are then used to approximate the solution on the fine partition.
In practice, the fine partition serves two primary purposes: (1) generating reference solutions to evaluate the accuracy of the coarse-scale solutions, and (2) implement the necessary components for constructing the basis functions.

From the fine partition $\mathcal{T}_h$ with the certain numerical scheme, we can obtain the fine-scale linear algebraic system as
\[
  A_h u_h = f_h.
\]
The corresponding coarse-scale system can be represented as
\[
  A_H u_H = f_H,
\]
where $A_H$ is derived from multiscale method.
Typically, the operator $A_H$ takes the form as
\[
  A_H = R_{H,h} A_h P_{h,H},
\]
where the two matrices $R_{H,h}$ and $P_{h,H}$ encapsulate the information of multiscale basis functions, and $f_H = R_{H,h}f_h$.
Specifically, each column of $P_{h,H}$ is a multiscale (trial) basis function represented in the fine-scale, while each row of $R_{H,h}$ is a multiscale (test) basis function.
Commonly, the matrix $R_{H,h}$ is the transpose of $P_{h,H}$, while it is not true for non-Hermitian problems such as Helmholtz equations with first-order absorbing boundary conditions.
Once $u_H$ is solved, the fine-scale approximation of $u_h$ is obtained by projecting $u_H$ back to the fine-scale as
\[
  u_h \approx P_{h,H} u_H.
\]
In practice, the offline phase refers to constructing $R_{H,h}$ and $P_{h,H}$ in a \emph{sparse matrix} format, and the online phase refers to solving the coarse-scale system and projecting the solution back to the fine-scale.

\subsection{Spectral auxiliary spaces}
Building upon the two-scale nested meshes, we can aggregate the fine DoFs into $\Card \mathcal{T}_H$ groups as $\mathcal{I}=\cup_{j}\mathcal{I}_j$, i.e., each group corresponds to a coarse subdomain.
For classic Lagrange FEMs, those fine DoF groups can overlap with each other, due to the nodes that are shared by multiple coarse subdomains.
For certain cell-central discretizations, e.g., finite volume methods, the fine DoF groups can be disjoint, although the study of such discretizations is still in its infancy.

At the continuous level, a spectral auxiliary space corresponding to the $j$-th coarse subdomain is constructed by solving an eigenvalue problem.
The mathematical formulation is given by
\begin{equation}
  \label{eq:eigenvalue}
  a_j(\Phi_{j,k}, v) = \lambda_{j,k} s_j(\Phi_{j,k}, v),\quad \forall v \in H^1(K_j),
\end{equation}
where $\Phi_{j,k}$ and $\lambda_{j,k}$ denote the eigenfunction and eigenvalue, respectively.
It is important to note that the boundary conditions must be homogeneous Neumann boundary conditions, rather than zero Dirichlet boundary conditions.
The bilinear form $a_j(\cdot, \cdot)$ is defined as the energy norm of the problem, specifically, $a_j(v, w)=\int_{\scriptscriptstyle K_j} \kappa \nabla u \cdot \nabla v \di x$.
However, the construction of the bilinear form $s_j(\cdot, \cdot)$ is more involved, which is essentially a weighted $L^2$ inner product as $s_j(v, w)=\int_{\scriptscriptstyle K_j} \tilde{\kappa} u v \di x$.
One common choice for the weight $\tilde{\kappa} = \kappa C_\text{msh} H^{-2}$, where $C_\text{msh}$ is a universal constant that can be determined by the regularity of the coarse mesh.
We also denote $a_\omega(\cdot, \cdot)$ and $s_\omega(\cdot, \cdot)$ as the bilinear forms defined on the subdomain $\omega$, and drop $\omega$ if $\omega$ is the whole domain $\Omega$.
The proportionality of $\tilde{\kappa}$ to $\kappa$ is essential for achieving contrast robustness.
Additionally, the inclusion of the factor $H^{-2}$ ensures that the eigenvalues $\lambda_{j,k}$ are dimensionless, with their growth pattern predictable by the Weyl law for convex domains.
The core idea of the spectral auxiliary space is to construct a projection operator $\pi_j$ that satisfies the following properties:
\begin{align*}
  \norm{\pi_j u}_{s_j}^2   & \leq \norm{\pi_j u}_{s_j}^2                                               \\
  \norm{u-\pi_i u}_{s_j}^2 & \leq \lambda_{j,k+1}^{-1} \norm{u}_{a_j}^2, \quad \forall u \in H^1(K_j).
\end{align*}
This highlights the necessity of homogeneous Neumann boundary conditions, as the energy bilinear form $a_j(\cdot, \cdot)$ must be defined for all $H^1(K_j)$ functions, rather than being restricted to $H^1_0(K_j)$ functions.
An important remark is that those eigenfunctions cannot be directly used as the basis functions, as they may not meet the global continuity condition.
However, various approaches have demonstrated that these eigenfunctions can be effectively incorporated into multilevel preconditioner frameworks, serving as coarse spaces and exhibiting superior contrast robustness compared to traditional methods

In practice, solving \cref{eq:eigenvalue} is typically reduced to a generalized eigenvalue problem:
\[
  A_j \Phi_{j,k} = \lambda_{j,k} S_j \Phi_{j,k},
\]
where the matrices $A_j$ and $S_j$ are derived from the bilinear forms $a_j(\cdot, \cdot)$ and $s_j(\cdot, \cdot)$, respectively.
Since the focus is on the eigenfunctions, the constant $C_\text{msh}$ does not need to be determined with high carefulness.
It is important to note that $A_j$ is positive semi-definite, while $S_j$ is positive definite, and only a few leading eigenfunctions are typically required.
As a result, sparse eigenvalue solvers, such as ARPACK and SLEPc, are well-suited for this task.
These solvers rely on Krylov subspace methods, and during each iteration, the inversion of $S_j$ may be required to transform the problem into a standard eigenvalue problem.
For large-scale eigenvalue problems, the mass-lumping technique, which replaces the mass matrix $S_j$ with a diagonal matrix, which can be trivially inverted, can be a practical and efficient alternative.

\subsection{Oversampling}
The oversampling technique was first introduced in the pioneering MsFEMs article to alleviate stiffness caused by boundary conditions imposed on coarse subdomains.
In the original MsFEMs, the oversampled domain is typically constructed by expanding a coarse subdomain with a few layers of fine elements, and hence the width of the oversampling layers is usually less than $H$.
In contrast, the oversampling concept in CEM-GMsFEMs is somewhat different: the oversampled domain is defined based on the neighboring relationship among coarse subdomains, with additional coarse subdomains recursively incorporated.
Consequently, an oversampled domain consisting of $k$ layers has a width of $kH$.
While it is straightforward to identify the oversampled domain for structured meshes, constructing it for unstructured meshes requires graph-based operations to recursively search for neighboring subdomains.

{\cref{fig:grid} serves as an illustration of the nested meshes, and note that two oversampling regions $K_{j'}^2$ and $K_{j''}^2$ are colored in gray.} For any $K_j \in \mathcal{T}_H$, we denote its oversampled domain by $K_{j,l}$, where $l$ represents the number of oversampling layers.
Through the oversampling process, we establish the following recursive relation for $K_{j,l}$:
\[
  K_{j,l} = \Int\RoundBrackets*{\cup\CurlyBrackets*{K_H \in \mathcal{T}_H \mid \overline{K_H} \cap \overline{K_{j,l-1}} \neq \varnothing}}, \quad \text{with } K_{j,0} = K_j.
\]
\begin{figure}[!ht]
  \centering
  \begin{tikzpicture}[scale=0.8]
    \draw[step=0.25, gray, thin] (0.0, 0.0) grid (7.4, 7.4);
    \draw[step=1.0, black, very thick] (0.0, -0.0) grid (7.4, 7.4);
    \foreach \x in {0,...,7}
    \foreach \y in {0,...,7}{
        \fill (1.0 * \x, 1.0 * \y) circle (1.5pt);
      }
    \fill[brown, opacity=0.4] (0.0, 1.0) rectangle (1.0, 2.0);
    \node at (0.5, 1.5) {$K_{j'}$};
    \fill[opacity=0.6, gray] (0.0, 0.0) rectangle (3.0, 4.0);
    \node at (0.5, 2.5) {$K_{j'}^2$};

    \fill[brown, opacity=0.4] (6.0, 4.0) rectangle (7.0, 5.0);
    \node at (6.5, 4.5) {$K_{j''}$};
    \fill[opacity=0.6, gray] (4.0, 2.0) rectangle (7.4, 7.0);
    \node at (5.5, 3.5) {$K_{j''}^2$};

    \fill [cyan, opacity=0.5] (3.25, 1.25) rectangle (3.5, 1.5);
    \node at (3.375, 1.375) {$e$};

    \node at (-0.5, 3.5) {$\partial \Omega$};
    \node at (3.5, -0.5) {$\partial \Omega$};
  \end{tikzpicture}
  \caption{
    Illustration of the two-scale nested meshes $\mathcal{T}_H$ and $\mathcal{T}_h$.
    A fine element $e$, two coarse elements $K_{j'}$ and $K_{j''}$, accompanied by their corresponding oversampling regions $K_{j'}^2$ and $K_{j''}^2$, are colored differently.
  }
  \label{fig:grid}
\end{figure}
As mentioned earlier, eigenfunctions are not suitable as basis functions because they lack continuity across coarse subdomain boundaries.
Hence, the pivotal idea behind oversampling is to extend and regularize these eigenfunctions by solving variational problems on the oversampled domain.
Specifically, for any eigenfunction $\Phi_{j,k}$, we can construct the corresponding multiscale basis function $\Psi_{j,k}$ using one of the following approaches:
\begin{itemize}
  \item \textbf{Constrained}: Find $\Psi_{j,k} \in H^1_0(K_{j,l})$ that minimizes
        \[
          \int_{K_{j, l}} \kappa \abs{\nabla v}^2 \, \di x,
        \]
        subject to
        \[
          s(v, \Phi_{j',k'}) = \delta_{j,j'} \delta_{k,k'}, \quad \forall\, j', k'.
        \]
  \item \textbf{Relaxed}: Find $\Psi_{j,k} \in H^1_0(K_{j,l})$ that minimizes
        \[
          \int_{K_{j, l}} \kappa \abs{\nabla v}^2 \, \di x + s(\pi v - \Phi_{j,k}, \pi v - \Phi_{j,k}).
        \]
\end{itemize}
Since the bilinear form $s_j(\cdot,\cdot)$ and the projection operator $\pi_j$ are defined piecewise without requiring continuity, we implicitly employ their global counterparts, $s(\cdot,\cdot)$ and $\pi$, over the entire domain $\Omega$.
Moreover, the relaxed formulation can be interpreted as a penalty method for the constrained approach by integrating the constraint into the objective function.
The final multiscale space is obtained by all $\Psi_{j,k}$, with each index $(j,k)$ corresponding to an eigenfunction.

In implementations, after constructing the oversampled domain, we build a mapping between the global DoFs and the local DoFs of the oversampled domain.
This mapping is used to correctly assemble the coarse-scale matrices $R_H$.
The discretizations for both the constrained and relaxed variational problems are carried out on the fine mesh $\mathcal{T}_h$.
For the constrained approach—which leads to a saddle point problem---we solve the linear system
\[
  \begin{bmatrix}
    Z & B^\intercal \\
    B & 0
  \end{bmatrix}
  \begin{bmatrix}
    \Phi \\
    \lambda
  \end{bmatrix}
  =
  \begin{bmatrix}
    0 \\
    b
  \end{bmatrix},
\]
where the block matrix $Z$ is derived from the bilinear form $a_{K_{j,l}}(\cdot,\cdot)$ and $B$ is constructed from the constraint $s_{K_{j,l}}(\pi\cdot,\pi\cdot)$.
For clarity, consider a coarse subdomain $K_j$: if all eigenvectors $\Phi_{j,\cdot}$ are arranged as the columns of a matrix $P_j$, then the projection operator $\pi_j$ can be expressed in the matrix form as $P_j P_j^\intercal S_j$.
If only the weights before the eigenvectors are needed, the projection simplifies to $P_j^\intercal S_j$.
Consequently, the matrix $B$ is formed as
\[
  B = \sum_{j'} P_{j'}^\intercal S_{j'},
\]
with the summation taken over all $j'$ corresponding to coarse subdomains contained in $K_{j,l}$ (with DoFs properly mapped), and the right-hand side vector $b$ is determined similarly.
For the relaxed approach, the linear system is much simpler:
\[
  (Z + C)\Phi = b,
\]
where
\[
  C = \sum_{j'} (P_{j'}^\intercal S_{j'})^\intercal P_{j'}^\intercal S_{j'}.
\]
This formulation involves significantly fewer DoFs than the constrained version, thus reducing computational cost.
However, since the matrix $P_{j'}$ is generally not sparse, the efficiency of solving the associated linear system may deteriorate as the number of DoFs increases.
In certain scenarios, this drawback can be alleviated.
For instance, if the discretization scheme permits non-overlapping local solvers (i.e., the DoF partition $\mathcal{I}=\cup_j\mathcal{I}_j$ is disjoint), the matrix $C$ can be rewritten as a block matrix with the structure $C = D D^\intercal$, where $D$ is a tall matrix.
In this case, the inversion of $Z+C$ can be performed using the Sherman--Morrison formula.
Specifically, by defining $z = D^\intercal \Phi$, we obtain
\[
  \Phi = -Z^{-1}D z +Z^{-1}b,
\]
which leads to the reduced system
\[
  (I + D^\intercal Z^{-1}D) z = D^\intercal Z^{-1}b,
\]
whose dimension is considerably smaller depending only on the total number of eigenfunctions on the oversampled domain.
Although this reduced system is dense, solving it is considerably less expensive than tackling the original system.
The price here is to factorize $Z$ first, which can be facilitated by sparse matrix factorizations.

\section{Keys in theoretical analysis}
\label{sec:keys}
The main theoretical result of CEM-GMsFEMs is an error estimate for the coarse-scale solution $u_H$, established under minimal assumptions on $\kappa$.
In particular, the error is expressed in terms of the coarse mesh size $H$ and the number of oversampling layers $l$, and it remains robust with respect to the contrast of $\kappa$, i.e., $\max_x \kappa(x)/\min_x \kappa(x)$.
The analysis presented below is for the relaxed formulation, but the constrained formulation can be treated similarly.

To facilitate the analysis, we introduce (and recall) the following notations:
\begin{itemize}
  \item $V^{\mathup{aux}}_j$ denotes the auxiliary space corresponding to the $j$-th coarse subdomain. When the index $j$ is dropped, $V^{\mathup{aux}}$ represents the direct sum of all such spaces.
  \item $\pi_j: L^2(K_j) \rightarrow L^2(K_j)$ is the projection operator onto the auxiliary space $V^{\mathup{aux}}_j$.
  \item $V_{j,l}\coloneqq H^1_0(K_{j,l})$, $a_{j,l}(\cdot,\cdot)\coloneqq a_{K_{j,l}}(\cdot,\cdot)$, and $s_{j,l}(\cdot,\cdot)\coloneqq s_{K_{j,l}}(\cdot,\cdot)$ are the shorthand notations for the function space and corresponding bilinear forms defined on the oversampled domain $K_{j,l}$.
  \item Notations for norms: $\norm{\cdot}_{a}^2 \coloneqq a(\cdot, \cdot)$, and similarly for $\norm{\cdot}_{s}$, $\norm{\cdot}_{a_{j,l}}$, and $\norm{\cdot}_{s_{j,l}}$.
  \item $V\coloneqq H^1_0(\Omega)$.
  \item For any $v\in L^2(K_j)$, the operator $\mathcal{G}_{j,l}:L^2(K_j) \rightarrow V_{j,l}$ is defined by the variational problem: find $\mathcal{G}_{j,l}v \in V_{j,l}$ such that
        \begin{equation} \label{eq:G-j-l}
          a_{j,l}(\mathcal{G}_{j,l}v, w) + s_{j,l}(\pi \mathcal{G}_{j,l}v, \pi w) = s_{j,l}(\pi v, \pi w), \quad \forall\, w \in V_{j,l}.
        \end{equation}
  \item For any $v\in L^2(K_j)$, the operator $\mathcal{G}_{j,\infty}:L^2(K_j) \rightarrow V$ is defined by the variational problem: find $\mathcal{G}_{j,\infty}v \in V$ such that
        \begin{equation} \label{eq:G-j-infty}
          a(\mathcal{G}_{j,\infty}v, w) + s(\pi \mathcal{G}_{j,\infty}v, \pi w) = s(\pi v, \pi w), \quad \forall\, w \in V.
        \end{equation}
  \item By extending functions in $L^2(K_j)$ to $L^2(\Omega)$, we define
        \[
          \mathcal{G}_{\infty}\coloneqq \sum_j \mathcal{G}_{j,\infty}\colon L^2(\Omega)\rightarrow V.
        \]
  \item By lifting the range of $\mathcal{G}_{j,l}$ to $V$ and extending functions from $L^2(K_j)$ to $L^2(\Omega)$, we define
        \[
          \mathcal{G}_l \coloneqq \sum_{j} \mathcal{G}_{j,l}\colon L^2(\Omega)\rightarrow V.
        \]
  \item The global multiscale space is given by $V_{H,\infty}=\Image(\mathcal{G}_{\infty})$, and the localized multiscale space by $V_{H,l}=\Image(\mathcal{G}_{l})$.
  \item Finally, define $\Lambda \coloneqq \min_{j} \lambda_{j,k+1}$.
\end{itemize}

\subsection{The global multiscale space is optimal}
The starting point is to verify the optimality of the global multiscale space $V_{H,\infty}$.
For elliptic problems, this result is typically straightforward because we can effectively exploit orthogonality.
Define
\[
  W = V \cap \ker \pi,
\]
where $\pi$ is the global projection operator.
First, we show that for any $v \in W$,
\[
  \norm{v}_{s}^2 = \sum_{j} \norm{v}_{s_j}^2 \leq \sum_{j} \lambda_{j,k+1}^{-1} \norm{v}_{a_j}^2 \leq \Lambda^{-1} \norm{v}_{a}^2.
\]
Moreover, by the definition of $\mathcal{G}_\infty$, for any $v \in V$ it holds that
\[
  a(\mathcal{G}_{\infty}v, w) + s(\pi \mathcal{G}_{\infty}v, \pi w) = s(\pi v, \pi w), \quad \forall\, w \in V.
\]
Testing this equation with any $w \in W$, the right-hand side vanishes, thereby establishing the orthogonality between $V_{H,\infty}$ and $W$.
Conversely, if for some $w\in V$ the relation $a(v,w) = 0$ holds for all $v \in V_{H,\infty}$, then it must be that $\pi w = 0$.
This conclusion is not immediately obvious and requires a careful argument: first, the set $\{\mathcal{G}_\infty \Phi_{j,k}\}$ is linearly independent; then, the set $\{\pi \mathcal{G}_\infty \Phi_{j,k} - \Phi_{j,k}\}$ is also linearly independent; finally, the condition
\[
  s(\pi v' - \pi \mathcal{G}_\infty v', \pi w) = 0\quad \forall v' \in L^2(\Omega)
\]
forces $\pi w = 0$.
Therefore, we can establish the following lemma:
\begin{lemma}
  It holds that
  \[
    V = V_{H,\infty} \oplus W.
  \]
\end{lemma}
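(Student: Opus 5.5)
We will prove the decomposition in its $a$-orthogonal form: we show that $W$ is exactly the $a$-orthogonal complement of $V_{H,\infty}$ in $V$. Then the sum is direct and, once we know $V_{H,\infty}$ is closed, it fills $V$. Since $\kappa$ is bounded below and we work in $V=H^1_0(\Omega)$, $a(\cdot,\cdot)$ is an inner product on $V$ equivalent to the $H^1$ inner product, and $(V,a)$ is a Hilbert space. The space $V^{\mathup{aux}}$ is finite dimensional, with $k$ eigenfunctions per coarse element and finitely many coarse elements. Hence $V_{H,\infty}=\Image(\mathcal{G}_\infty)$ is finite dimensional, because $\mathcal{G}_\infty v$ depends only on $\pi v$. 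In particular $V_{H,\infty}$ is closed, and the projection theorem gives $V=V_{H,\infty}\oplus V_{H,\infty}^{\perp_a}$. It therefore suffices to prove $V_{H,\infty}^{\perp_a}=W$.

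\textbf{Inclusion $W\subseteq V_{H,\infty}^{\perp_a}$.} Take $w\in W$ and $v\in L^2(\Omega)$. Testing the defining equation \cref{eq:G-j-infty}, summed over $j$, with $w$ gives
\[
a(\mathcal{G}_\infty v,w)=s(\pi v,\pi w)-s(\pi\mathcal{G}_\infty v,\pi w)=0,
\]
because $\pi w=0$.

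\textbf{Reverse inclusion.} Let $w\in V$ with $a(\mathcal{G}_\infty v,w)=0$ for all $v$. The same identity then yields $s(\pi v-\pi\mathcal{G}_\infty v,\pi w)=0$ for all $v\in L^2(\Omega)$. Define the linear map $T\colon V^{\mathup{aux}}\to V^{\mathup{aux}}$ by $T\phi=\phi-\pi\mathcal{G}_\infty\phi$. The condition says $\pi w$ is $s$-orthogonal to $\Image T$. Since $V^{\mathup{aux}}$ is finite dimensional, it suffices to show $T$ is injective; then $T$ is surjective, so $\pi w\perp_s V^{\mathup{aux}}$, and because $\pi w\in V^{\mathup{aux}}$ we get $\pi w=0$, i.e.\ $w\in W$.

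\textbf{Injectivity of $T$ (the main step).} Suppose $\phi\in V^{\mathup{aux}}$ with $\pi\mathcal{G}_\infty\phi=\phi$, and set $u=\mathcal{G}_\infty\phi$. Testing \cref{eq:G-j-infty} with $w=u$ gives
\[
\norm{u}_a^2+\norm{\pi u}_s^2=s(\phi,\pi u)=\norm{\phi}_s^2=\norm{\pi u}_s^2,
\]
so $\norm{u}_a=0$ and hence $u=0$ in $H^1_0(\Omega)$. Then $\phi=\pi u=0$, so $T$ is injective.

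This argument is a cleaner replacement for the linear independence of $\{\mathcal{G}_\infty\Phi_{j,k}\}$ and $\{\pi\mathcal{G}_\infty\Phi_{j,k}-\Phi_{j,k}\}$ sketched in the text. The only delicate point is that $s$ is positive definite on $V^{\mathup{aux}}$, which holds because $\tilde\kappa>0$. It also uses that $\pi$ is the $s$-orthogonal projection, so that $s(\phi,\pi u)=s(\pi\phi,\pi u)$.
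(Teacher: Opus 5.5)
Your proof is correct and follows the paper's route: you get orthogonality by testing the defining equation of $\mathcal{G}_\infty$ with $w\in W$, and the converse via $s(\pi v-\pi\mathcal{G}_\infty v,\pi w)=0$ together with the finite dimensionality of $V^{\mathup{aux}}$. Your energy-identity proof that $\phi\mapsto\phi-\pi\mathcal{G}_\infty\phi$ is injective on $V^{\mathup{aux}}$ is exactly the linear independence of $\{\pi\mathcal{G}_\infty\Phi_{j,k}-\Phi_{j,k}\}$ that the paper asserts, obtained directly without first proving the independence of $\{\mathcal{G}_\infty\Phi_{j,k}\}$.
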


Take $e = u - u_{H,\infty}^\mathup{ms}$, where \( u_{H,\infty}^\mathup{ms} \) is the multiscale solution derived from the global multiscale space.
By Galerkin orthogonality,
\[
  a(e,v)=0,\quad \forall\, v\in V_{H,\infty},
\]
which implies that \( e\in W \).
Using the variational formulation of \cref{eq:model}, we obtain
\[
  \norm{e}_a^2 = a(e,e) = \int_{\Omega} f e\, \di x \leq \norm{f}_{s^*} \norm{e}_s \leq \frac{1}{\sqrt{\Lambda}} \norm{f}_{s^*} \norm{e}_a,
\]
where \( s^* \) denotes the dual norm corresponding to the weighted \( L^2 \) inner product \( s(\cdot,\cdot) \).
Recalling that the weight \(\tilde{s}\) includes the factor \( H^{-2} \), we can deduce
\[
  \norm{f}_{s^*} \le H\,\frac{1}{\min_x \kappa(x)} \norm{f}_{L^2(\Omega)}.
\]
Since this estimate depends on the coefficient \(\kappa\), we retain the notation \( \norm{\cdot}_{s^*} \).
Note that the \( \bigO(H) \) estimate is optimal, as indicated by the Kolmogorov n-width theory, which characterizes the best approximation for solutions with \(L^2\) right-hand sides.
We thus conclude the error estimate for the global multiscale space:
\begin{theorem}
  \label{thm:global}
  Let \( u \) be the solution of \cref{eq:model} and \( u_{H,\infty}^\mathup{ms} \) be the multiscale solution obtained from the global multiscale space. Then, it holds that
  \[
    \norm{u - u_{H,\infty}^\mathup{ms}}_a \le \frac{1}{\sqrt{\Lambda}}\, \norm{f}_{s^*}.
  \]
\end{theorem}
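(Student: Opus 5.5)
The argument combines Galerkin orthogonality with the decomposition $V = V_{H,\infty}\oplus W$ from the preceding lemma, and then uses the spectral estimate on $\ker\pi$.

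Let $u_{H,\infty}^\mathup{ms}\in V_{H,\infty}$ be the Galerkin solution, so that $a(u_{H,\infty}^\mathup{ms},v)=\int_\Omega f v\,\di x$ for all $v\in V_{H,\infty}$. Set $e=u-u_{H,\infty}^\mathup{ms}$. Then $a(e,v)=0$ for every $v\in V_{H,\infty}$.

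The first step is to show $e\in W$, that is, $\pi e=0$. I would use the converse characterization established before the lemma: any $w\in V$ that is $a$-orthogonal to $V_{H,\infty}$ satisfies $\pi w=0$. That argument tests the defining equation of $\mathcal{G}_\infty$ with $w$ and obtains $s(\pi v'-\pi\mathcal{G}_\infty v',\pi w)=0$ for all $v'$. The linear independence of $\{\pi\mathcal{G}_\infty\Phi_{j,k}-\Phi_{j,k}\}$ shows that these functions span $V^{\mathup{aux}}$, which forces $\pi w=0$. This spanning step is the only nontrivial point.

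Alternatively, the lemma gives $e=e_1+e_2$ with $e_1\in V_{H,\infty}$ and $e_2\in W$. Since $V_{H,\infty}$ is $a$-orthogonal to $W$, we have $a(e_1,e_1)=a(e,e_1)-a(e_2,e_1)=0$. Hence $e_1=0$, because $\norm{\cdot}_a$ is a norm on $H^1_0$. I would use this second route, since it relies only on the lemma and on the orthogonality $a(V_{H,\infty},W)=0$, which was shown directly.

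Next comes the energy identity. Because $u_{H,\infty}^\mathup{ms}\in V_{H,\infty}$ and $e$ is $a$-orthogonal to $V_{H,\infty}$, we have $a(e,e)=a(u,e)=\int_\Omega fe\,\di x$. By duality of the weighted inner product $s$, this is bounded by $\norm{f}_{s^*}\norm{e}_s$. Since $e\in W$, we have $\pi_j e=0$ on each $K_j$. The spectral approximation property $\norm{v-\pi_jv}_{s_j}^2\le\lambda_{j,k+1}^{-1}\norm{v}_{a_j}^2$ then gives
\[
\norm{e}_s^2=\sum_j\norm{e}_{s_j}^2\le\sum_j\lambda_{j,k+1}^{-1}\norm{e}_{a_j}^2\le\Lambda^{-1}\norm{e}_a^2 .
\]
Combining the two bounds yields $\norm{e}_a^2\le\Lambda^{-1/2}\norm{f}_{s^*}\norm{e}_a$. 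If $e\neq0$, dividing by $\norm{e}_a$ gives the claim; if $e=0$, the claim is trivial.

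The main obstacle is justifying $e\in W$, which is the orthogonal-complement characterization. Everything else is a two-line estimate.
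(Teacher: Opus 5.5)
Your proposal is correct and follows the paper's proof. Galerkin orthogonality places $e=u-u_{H,\infty}^{\mathup{ms}}$ in $W=V\cap\ker\pi$, and then $\norm{e}_a^2=\int_\Omega fe\,\di x\le\norm{f}_{s^*}\norm{e}_s\le\Lambda^{-1/2}\norm{f}_{s^*}\norm{e}_a$. Your preferred route to $e\in W$ (split $e$ using $V=V_{H,\infty}\oplus W$, then remove the $V_{H,\infty}$-component by $a$-orthogonality) is valid; it just repackages the paper's characterization of the $a$-orthogonal complement of $V_{H,\infty}$, which is exactly what that lemma encodes.
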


\subsection{A three-step proof for localization}
In this section, we present an estimate for $\mathcal{G}_l - \mathcal{G}_\infty$ in terms of the number of oversampling layers $l$.
The proof is organized into three steps, with each step addressing a different aspect of the localization process.
A key technique in our analysis is the use of cutoff functions defined relative to the coarse mesh $\mathcal{T}_H$:

\begin{definition}\label{def:cutoff}
  For a coarse subdomain $K_j \in \mathcal{T}_H$, a cutoff function $\chi_j^{l,l'}$ with $l'>l$ satisfies the following properties:
  \begin{equation}
    \begin{aligned}
      \chi_j^{l,l'}(x) \equiv 1\quad    & \text{in}\ K_{j,l},                   \\
      \chi_j^{l,l'}(x) \equiv 0\quad    & \text{in}\ \Omega\setminus K_{j,l'},  \\
      0\leq \chi_j^{l,l'}(x)\leq 1\quad & \text{in}\ K_{j,l'}\setminus K_{j,l}.
    \end{aligned}
  \end{equation}
\end{definition}

The construction of such cutoff functions is straightforward, particularly when a coarse triangulation of the domain is available.
In our analysis, we frequently use the following estimate: for any $j$, $l$, and $l'$,
\[
  \|\nabla \chi_j^{l,l'}\|_{L^\infty(\Omega)} \leq \frac{C_{\chi}}{H\,|l'-l|},
\]
where $C_{\chi}$ is a constant that depends solely on the regularity of the coarse mesh $\mathcal{T}_H$.
Therefore, we can safely assume that
\[
  \kappa \abs{\nabla \chi_j^{l,l'}}^2 \leq \tilde{\kappa},
\]
for any cutoff function $\chi_j^{l,l'}$.

\begin{lemma}[The first step]
  \label{lem:step1}
  Let $l \geq 1$.
  There exists a positive constant $0<\theta<1$ such that
  \[
    \norm{\mathcal{G}_{j,\infty} v}_{a(\Omega \setminus K_{j,l})}^2+\norm{\pi \mathcal{G}_{j,\infty} v}_{s(\Omega\setminus K_{j,l})}^2 \leq \theta^l \CurlyBrackets*{\norm{\mathcal{G}_{j,\infty} v}_{a}^2+\norm{\pi \mathcal{G}_{j,\infty} v}_{s}^2},
  \]
  for any $v \in L^2(K_j)$,
  where $\theta = c_{*}/(c_{*}+1)$ and
  \[
    c_{*}(\Lambda)=\max_{x\in [0,\frac{\pi}{2}]} \RoundBrackets*{\cos(x)+\sin(x)}\RoundBrackets*{\frac{\cos(x)}{\sqrt{\Lambda}}+\sin(x)}.
  \]
\end{lemma}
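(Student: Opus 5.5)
Set $\psi\coloneqq\mathcal{G}_{j,\infty}v$ and $\norm{w}_{*,\omega}^2\coloneqq\norm{w}_{a(\omega)}^2+\norm{\pi w}_{s(\omega)}^2$. We aim for the one-layer recursion
\[
\norm{\psi}_{*,\Omega\setminus K_{j,m+1}}^2\le c_*\,\norm{\psi}_{*,K_{j,m+1}\setminus K_{j,m}}^2,\qquad m\ge 0.
\]
This is equivalent to $(1+c_*)\norm{\psi}_{*,\Omega\setminus K_{j,m+1}}^2\le c_*\norm{\psi}_{*,\Omega\setminus K_{j,m}}^2$, that is, $\norm{\psi}_{*,\Omega\setminus K_{j,m+1}}^2\le\theta\norm{\psi}_{*,\Omega\setminus K_{j,m}}^2$. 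Iterating from $m=0$ to $m=l-1$ and bounding $\norm{\psi}_{*,\Omega\setminus K_{j,0}}\le\norm{\psi}_{*,\Omega}$ gives the claim.

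To prove the recursion, use the cutoff $\eta\coloneqq 1-\chi_j^{m,m+1}$ from \cref{def:cutoff}. It vanishes on $K_{j,m}\supset K_j$, equals $1$ outside $K_{j,m+1}$, and satisfies $\kappa\abs{\nabla\eta}^2\le\tilde\kappa$. Test \cref{eq:G-j-infty} with $w=\eta\psi\in V$. Because $v$ is supported in $K_j$ and $\pi$ acts elementwise, $s(\pi v,\pi(\eta\psi))$ involves only $K_j$, where $\eta\psi=0$; so the right-hand side vanishes. Hence
\[
a(\psi,\eta\psi)+s(\pi\psi,\pi(\eta\psi))=0 .
\]
Expanding $a(\psi,\eta\psi)=\int\kappa\eta\abs{\nabla\psi}^2+\int\kappa\psi\nabla\eta\cdot\nabla\psi$ and writing $\pi(\eta\psi)=\eta\pi\psi+\bigl(\pi(\eta\psi)-\eta\pi\psi\bigr)$, we obtain
\[
\norm{\psi}_{*,\Omega\setminus K_{j,m+1}}^2\le\int\eta\bigl(\kappa\abs{\nabla\psi}^2+\tilde\kappa\abs{\pi\psi}^2\bigr)\le\abs{I_1}+\abs{I_2},
\]
with $I_1=\int\kappa\psi\nabla\eta\cdot\nabla\psi$ and $I_2=s\bigl(\pi\psi,\pi(\eta\psi)-\eta\pi\psi\bigr)$. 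Both $I_1$ and $I_2$ are supported in the ring $R\coloneqq K_{j,m+1}\setminus K_{j,m}$, because $\eta$ is constant on every coarse element outside $R$ and on those elements $\pi(\eta\psi)=\eta\pi\psi$.

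Both terms must now be bounded by the ring energy $\norm{\psi}_{*,R}$. For $I_1$ we have $\abs{I_1}\le\norm{\psi}_{s(R)}\norm{\psi}_{a(R)}$. The spectral estimate, applied elementwise, gives
\[
\norm{\psi}_{s(R)}\le\norm{\pi\psi}_{s(R)}+\norm{(1-\pi)\psi}_{s(R)}\le\norm{\pi\psi}_{s(R)}+\Lambda^{-1/2}\norm{\psi}_{a(R)} .
\]
For $I_2$ we use $\norm{\pi(\eta\psi)-\eta\pi\psi}_{s(K)}\le\norm{\eta\psi-\eta\pi\psi}_{s(K)}\le\norm{(1-\pi)\psi}_{s(K)}$, valid since $0\le\eta\le1$ and $\pi$ is an $s$-orthogonal projection. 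This gives $\abs{I_2}\le\norm{\pi\psi}_{s(R)}\Lambda^{-1/2}\norm{\psi}_{a(R)}$.

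Set $\norm{\psi}_{a(R)}=\rho\cos x$ and $\norm{\pi\psi}_{s(R)}=\rho\sin x$ with $\rho=\norm{\psi}_{*,R}$. Then
\[
\abs{I_1}+\abs{I_2}\le\rho^2\cos x\Bigl(\frac{\cos x}{\sqrt\Lambda}+\sin x\Bigr)+\rho^2\frac{\sin x\cos x}{\sqrt\Lambda}\le\rho^2(\cos x+\sin x)\Bigl(\frac{\cos x}{\sqrt\Lambda}+\sin x\Bigr).
\]
The last step holds because the expanded right-hand side contains $\cos^2 x/\sqrt\Lambda+\cos x\sin x+\sin x\cos x/\sqrt\Lambda+\sin^2 x$, which dominates the middle expression. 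Taking the maximum over $x\in[0,\pi/2]$ gives $c_*$ and completes the recursion.

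The main obstacle is the commutator term $I_2$. We must confirm that $\pi(\eta\psi)-\eta\pi\psi$ is controlled by $(1-\pi)\psi$ and is localized to the ring, which relies on $\pi$ being elementwise and $s$-orthogonal. Getting the constants to match exactly the stated form of $c_*$ may also require some slack in how the terms are grouped.
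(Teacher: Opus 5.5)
\textbf{Verdict: one step fails as written, but it is easily repaired; otherwise the argument is the paper's.}

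Your strategy coincides with the paper's:
\begin{itemize}
  \item test \cref{eq:G-j-infty} with $(1-\chi_j^{l-1,l})\mathcal{G}_{j,\infty}v$;
  \item use $\kappa\abs{\nabla\chi}^2\le\tilde\kappa$ and the elementwise spectral estimate on the ring;
  \item parametrize by $(\cos x,\sin x)$ and iterate the one-layer contraction by $\theta$.
\end{itemize}

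The step that fails is your commutator bound. The inequality $\norm{\pi(\eta\psi)-\eta\pi\psi}_{s(K)}\le\norm{\eta\psi-\eta\pi\psi}_{s(K)}$ is false in general. Indeed
\[
\pi(\eta\psi)-\eta\pi\psi=\pi\bigl(\eta(1-\pi)\psi\bigr)-(1-\pi)(\eta\pi\psi),
\]
and the second piece is not controlled by $(1-\pi)\psi$ at all, because multiplying by a non-constant cutoff leaves the auxiliary space. Concretely, take $\psi|_K\equiv1$, so $(1-\pi)\psi=0$ on $K$, and $\eta|_K\notin V^{\mathup{aux}}_K$. Then your right-hand side vanishes while the left-hand side equals $\norm{(1-\pi)\eta}_{s(K)}\neq0$. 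This is exactly the obstacle you flagged, and it cannot be resolved in norm.

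The bound you actually need, $\abs{I_2}\le\Lambda^{-1/2}\norm{\pi\psi}_{s(R)}\norm{\psi}_{a(R)}$, is nonetheless true. The reason is that $I_2$ is only a pairing against $\pi\psi\in V^{\mathup{aux}}$, and $\pi$ is the $s$-orthogonal projection. Hence $s_K(\pi\psi,\pi(\eta\psi))=s_K(\pi\psi,\eta\psi)$ and $s_K(\pi\psi,(1-\pi)(\eta\pi\psi))=0$, so $I_2=s\bigl(\pi\psi,\eta(1-\pi)\psi\bigr)$. Cauchy--Schwarz with $0\le\eta\le1$ and the spectral estimate then finish. With this repair your proof is complete, and it even gives a constant slightly below $c_*$, with no $\sin^2x$ term.

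The paper sidesteps the commutator entirely. It keeps the ring term $\int_R\tilde\kappa\,\pi\psi\,\pi[(\chi_j^{l-1,l}-1)\psi]$ whole and bounds it by $\norm{\pi\psi}_{s(R)}\norm{\psi}_{s(R)}$ using contractivity of $\pi$. It then applies $\norm{\psi}_{s(R)}\le\Lambda^{-1/2}\norm{\psi}_{a(R)}+\norm{\pi\psi}_{s(R)}$. That cruder step is precisely where the $\sin^2x$ term in $c_*$ comes from.
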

This lemma indicates that \(\mathcal{G}_{j,\infty}v\) decays exponentially away from the coarse subdomain \(K_j\), which in turn establishes the viability of localization.
\begin{proof}
  Replacing the test function with $\RoundBrackets{1-\chi_j^{l-1,l}} \mathcal{G}_{j,\infty}$ in \cref{eq:G-j-infty}, recalling $1-\chi_j^{l-1,l}\equiv 0$ in $K_{j,l-1}$ and $1-\chi_j^{l-1,l}\equiv 1$ in $\Omega\setminus K_{j,l}$, we then obtain
  \begin{align*}
     & \quad \norm{\mathcal{G}_{j,\infty} v}_{a\RoundBrackets{\Omega \setminus K_{j,l}}}^2+\norm{\pi \mathcal{G}_{j,\infty} v}_{s\RoundBrackets{\Omega\setminus K_{j,l}}}^2                                                                                                                  \\
     & =\int_{K_{j,l}\setminus K_{j,l-1}}\RoundBrackets{\chi_j^{l-1,l}-1}\kappa\nabla \mathcal{G}_{j,\infty} v\cdot \nabla \mathcal{G}_{j,\infty} v\di x +\int_{K_{j,l}\setminus K_{j,l-1}} \kappa \mathcal{G}_{j,\infty} v \nabla \mathcal{G}_{j,\infty} v \cdot \nabla \chi_j^{l-1,l}\di x \\
     & \quad+\int_{K_{j,l}\setminus K_{j,l-1}} \tilde{\kappa} \pi \mathcal{G}_{j,\infty} v\cdot \pi \SquareBrackets{\RoundBrackets{\chi_j^{l-1,l}-1}\mathcal{G}_{j,\infty} v} \di x \coloneqq I_1+I_2+I_3.
  \end{align*}

  According to \cref{def:cutoff}, we have $\chi_j^{l-1,l}-1\leq 0$ in $K_{j,l}\setminus K_{j,l-1}$, which gives $I_1\leq 0$. By the property of cutoff functions, we can derive
  \[
    I_2 \leq \norm{\mathcal{G}_{j,\infty} v}_{a\RoundBrackets{K_{j,l}\setminus K_{j,l-1}}}\norm{\mathcal{G}_{j,\infty} v}_{s\RoundBrackets{K_{j,l}\setminus K_{j,l-1}}}.
  \]
  For $I_3$, applying the Cauchy--Schwarz inequality and eigenspace expansion estimates, we have
  \begin{align*}
    I_3 & \leq \norm{\pi \mathcal{G}_{j,\infty} v}_{s\RoundBrackets{K_{j,l}\setminus K_{j,l-1}}} \norm{\pi \SquareBrackets{\RoundBrackets{\chi_j^{l-1,l}-1}\mathcal{G}_{j,\infty} v}}_{s\RoundBrackets{K_{j,l}\setminus K_{j,l-1}}} \\
        & \leq \norm{\pi \mathcal{G}_{j,\infty} v}_{s\RoundBrackets{K_{j,l}\setminus K_{j,l-1}}} \norm{\RoundBrackets{\chi_j^{l-1,l}-1}\mathcal{G}_{j,\infty} v}_{s\RoundBrackets{K_{j,l}\setminus K_{j,l-1}}}                      \\
        & \leq \norm{\pi \mathcal{G}_{j,\infty} v}_{s\RoundBrackets{K_{j,l}\setminus K_{j,l-1}}} \norm{\mathcal{G}_{j,\infty} v}_{s\RoundBrackets{K_{j,l}\setminus K_{j,l-1}}}.
  \end{align*}
  Meanwhile, it also holds that $\norm{\mathcal{G}_{j,\infty} v}_{s\RoundBrackets{K_{j,l}\setminus K_{j,l-1}}}$ as
  \begin{align*}
    \norm{\mathcal{G}_{j,\infty} v}_{s\RoundBrackets{K_{j,l}\setminus K_{j,l-1}}}\leq & \norm{\mathcal{G}_{j,\infty} v-\pi \mathcal{G}_{j,\infty} v}_{s\RoundBrackets{K_{j,l}\setminus K_{j,l-1}}}+\norm{\pi \mathcal{G}_{j,\infty} v}_{s\RoundBrackets{K_{j,l}\setminus K_{j,l-1}}} \\
    \leq                                                                              & \frac{1}{\sqrt{\Lambda}} \norm{\mathcal{G}_{j,\infty} v}_{a\RoundBrackets{K_{j,l}\setminus K_{j,l-1}}}+\norm{\pi \mathcal{G}_{j,\infty} v}_{s\RoundBrackets{K_{j,l}\setminus K_{j,l-1}}}.
  \end{align*}
  Collecting all the estimates for $I_1$, $I_2$ and $I_3$, we arrive at
  \begin{align*}
         & \norm{\mathcal{G}_{j,\infty} v}_{a\RoundBrackets{\Omega \setminus K_{j,l}}}^2+\norm{\pi \mathcal{G}_{j,\infty} v}_{s\RoundBrackets{\Omega\setminus K_{j,l}}}^2                                                  \\
    \leq & \RoundBrackets*{\norm{\mathcal{G}_{j,\infty} v}_{a\RoundBrackets{K_{j,l}\setminus K_{j,l-1}}}+\norm{\pi \mathcal{G}_{j,\infty} v}_{s\RoundBrackets{K_{j,l}\setminus K_{j,l-1}}}} \times                         \\
         & \quad \RoundBrackets*{\frac{1}{\sqrt{\Lambda}} \norm{\mathcal{G}_{j,\infty} v}_{a\RoundBrackets{K_{j,l}\setminus K_{j,l-1}}}+\norm{\pi \mathcal{G}_{j,\infty} v}_{s\RoundBrackets{K_{j,l}\setminus K_{j,l-1}}}} \\
    \leq & c_{*}(\Lambda)\CurlyBrackets*{\norm{\mathcal{G}_{j,\infty} v}_{a\RoundBrackets{K_{j,l}\setminus K_{j,l-1}}}^2+\norm{\pi \mathcal{G}_{j,\infty} v}_{s\RoundBrackets{K_{j,l}\setminus K_{j,l-1}}}^2},
  \end{align*}
  where the constant $c_*(\Lambda)$ emerges by taking $x \in [0, \pi/2]$ such that
  \begin{align*}
    \cos(x) & = \frac{\norm{\mathcal{G}_{j,\infty} v}_{a\RoundBrackets{K_{j,l}\setminus K_{j,l-1}}}}{\sqrt{\norm{\mathcal{G}_{j,\infty} v}_{a\RoundBrackets{K_{j,l}\setminus K_{j,l-1}}}^2+\norm{\pi \mathcal{G}_{j,\infty} v}_{s\RoundBrackets{K_{j,l}\setminus K_{j,l-1}}}^2}},     \\
    \sin(x) & = \frac{\norm{\pi \mathcal{G}_{j,\infty} v}_{s\RoundBrackets{K_{j,l}\setminus K_{j,l-1}}}}{\sqrt{\norm{\mathcal{G}_{j,\infty} v}_{a\RoundBrackets{K_{j,l}\setminus K_{j,l-1}}}^2+\norm{\pi \mathcal{G}_{j,\infty} v}_{s\RoundBrackets{K_{j,l}\setminus K_{j,l-1}}}^2}}.
  \end{align*}
  This yields an iterative relation
  \begin{align*}
     & \quad \norm{\mathcal{G}_{j,\infty} v}_{a\RoundBrackets{\Omega \setminus K_{j,l-1}}}^2+\norm{\pi \mathcal{G}_{j,\infty} v}_{s\RoundBrackets{\Omega\setminus K_{j,l-1}}}^2                                                                                                                                                                \\
     & =\norm{\mathcal{G}_{j,\infty} v}_{a\RoundBrackets{\Omega \setminus K_{j,l}}}^2+\norm{\pi \mathcal{G}_{j,\infty} v}_{s\RoundBrackets{\Omega\setminus K_{j,l}}}^2 + \norm{\mathcal{G}_{j,\infty} v}_{a\RoundBrackets{K_{j,l} \setminus K_{j,l-1}}}^2+\norm{\pi \mathcal{G}_{j,\infty} v}_{s\RoundBrackets{K_{j,l} \setminus K_{j,l-1}}}^2 \\
     & \geq\RoundBrackets{1+\frac{1}{c_{*}}} {\CurlyBrackets*{\norm{\mathcal{G}_{j,\infty} v}_{a\RoundBrackets{\Omega \setminus K_{j,l}}}^2+\norm{\pi \mathcal{G}_{j,\infty} v}_{s\RoundBrackets{\Omega\setminus K_{j,l}}}^2}},
  \end{align*}
  and also finishes the proof.
\end{proof}

The second step provides an estimate on the difference between $\mathcal{G}_{j,\infty}$ and $\mathcal{G}_{j,l}$.
\begin{lemma}[The second step]
  \label{lem:step2}
  Keep the notations, then
  \[
    \norm{\RoundBrackets{\mathcal{G}_{j,\infty}-\mathcal{G}_{j,l}}v}_a^2+\norm{\pi \RoundBrackets{\mathcal{G}_{j,\infty}-\mathcal{G}_{j,l}}v}_s^2 \leq c_{\star}\theta^{l-1}\RoundBrackets{\norm{\mathcal{G}_{j,\infty} v}_{a}^2+\norm{\pi \mathcal{G}_{j,\infty} v}_{s}^2},
  \]
  where
  \[
    c_{\star}(\Lambda)=\max_{x\in [0,\frac{\pi}{2}]} \SquareBrackets*{\RoundBrackets{\frac{1}{\sqrt{\Lambda}}+1}\cos(x)+\sin(x)}^2+\RoundBrackets*{\frac{\cos(x)}{\sqrt{\Lambda}}+\sin(x)}^2.
  \]
\end{lemma}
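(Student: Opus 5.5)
The plan is a Céa-type argument: show that $\mathcal{G}_{j,l}v$ is a Galerkin projection of $\mathcal{G}_{j,\infty}v$ onto $V_{j,l}$, bound the projection error by a cutoff of $\mathcal{G}_{j,\infty}v$, and then apply \cref{lem:step1} with one fewer layer.

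Throughout, write $\psi\coloneqq\mathcal{G}_{j,\infty}v$, $\psi_l\coloneqq\mathcal{G}_{j,l}v$, and use the combined inner product $b(u,w)\coloneqq a(u,w)+s(\pi u,\pi w)$ with norm $\norm{\cdot}_b$. Since $V_{j,l}\subset V$, subtracting \cref{eq:G-j-l} from \cref{eq:G-j-infty} for test functions $w\in V_{j,l}$ gives $b(\psi-\psi_l,w)=0$. The right-hand sides agree because both equal $s(\pi v,\pi w)$ restricted to $K_{j,l}\supset K_j$, where $\pi w$ is supported. Hence $\psi_l$ is the $b$-orthogonal projection of $\psi$ onto $V_{j,l}$, and
\[
\norm{\psi-\psi_l}_b\le \norm{\psi-w}_b\quad\forall w\in V_{j,l}.
\]

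Take $w=\chi_j^{l-1,l}\psi\in V_{j,l}$. Then $\psi-w=(1-\chi)\psi$ with $\chi\coloneqq\chi_j^{l-1,l}$, which vanishes on $K_{j,l-1}$. Its energy satisfies
\[
\norm{(1-\chi)\psi}_a\le\norm{\psi}_{a(\Omega\setminus K_{j,l-1})}+\norm{\psi\nabla\chi}_{\kappa,L^2(K_{j,l}\setminus K_{j,l-1})}\le \norm{\psi}_{a(\Omega\setminus K_{j,l-1})}+\norm{\psi}_{s(\Omega\setminus K_{j,l-1})},
\]
using $\kappa|\nabla\chi|^2\le\tilde\kappa$. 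For the $s$-part, $\pi$ is an $s$-orthogonal projection applied elementwise, and $(1-\chi)\psi$ is supported on coarse elements outside $K_{j,l-1}$. Therefore $\norm{\pi[(1-\chi)\psi]}_s\le\norm{(1-\chi)\psi}_{s}\le\norm{\psi}_{s(\Omega\setminus K_{j,l-1})}$.

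Next, as in \cref{lem:step1}, split $\psi=(\psi-\pi\psi)+\pi\psi$ and apply the eigenspace estimate elementwise:
\[
\norm{\psi}_{s(\Omega\setminus K_{j,l-1})}\le \Lambda^{-1/2}\norm{\psi}_{a(\Omega\setminus K_{j,l-1})}+\norm{\pi\psi}_{s(\Omega\setminus K_{j,l-1})}.
\]
Set $A\coloneqq\norm{\psi}_{a(\Omega\setminus K_{j,l-1})}$ and $S\coloneqq\norm{\pi\psi}_{s(\Omega\setminus K_{j,l-1})}$. Combining the pieces gives
\[
\norm{\psi-\psi_l}_b^2\le\big[(1+\Lambda^{-1/2})A+S\big]^2+\big[\Lambda^{-1/2}A+S\big]^2 .
\]
Parametrize $A=r\cos x$ and $S=r\sin x$ with $r^2=A^2+S^2$ and $x\in[0,\pi/2]$. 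The bound then becomes at most $c_\star(\Lambda)\,r^2$.

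Finally, \cref{lem:step1} with $l-1$ layers gives $r^2\le\theta^{l-1}\norm{\psi}_b^2$, which is the claim. For $l=1$ we have $\theta^{0}=1$, and the bound $r^2\le\norm{\psi}_b^2$ is trivial, so \cref{lem:step1} is only needed for $l\ge2$.

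The main obstacle is the bookkeeping in the middle step. One must check that the layers are coarse-element aligned, so that $\pi$ commutes with restriction to $\Omega\setminus K_{j,l-1}$ and the estimate for $\pi[(1-\chi)\psi]$ is legitimate. One must also confirm that the cutoff-gradient term lands exactly in the $s$-norm on the annulus, and that the $a$ and $s$ contributions group into the specific form defining $c_\star$. If those groupings come out slightly differently, the constant changes but the structure of the argument stays the same.
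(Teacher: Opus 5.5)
Your proposal is correct and is essentially the paper's argument. The paper splits $z_j=(\mathcal{G}_{j,\infty}-\mathcal{G}_{j,l})v$ into $z_j'=(1-\chi_j^{l-1,l})\mathcal{G}_{j,\infty}v$ and $z_j''=\chi_j^{l-1,l}\mathcal{G}_{j,\infty}v-\mathcal{G}_{j,l}v\in V_{j,l}$, then uses $a(z_j,z_j'')+s(\pi z_j,\pi z_j'')=0$; this is exactly your C\'{e}a step with $w=\chi_j^{l-1,l}\mathcal{G}_{j,\infty}v$, and the remaining steps (the bounds on $\|z_j'\|_a$ and $\|\pi z_j'\|_s$, the $\cos/\sin$ parametrization giving $c_\star$, and \cref{lem:step1} with $l-1$ layers) match yours.
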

\begin{proof}
  Let $z_j\coloneqq \RoundBrackets{\mathcal{G}_{j,\infty}-\mathcal{G}_{j,l}}v$, and decompose $z_j$ as
  \[
    z_j=\CurlyBrackets{\RoundBrackets{1-\chi_j^{l-1,l}}\mathcal{G}_{j,\infty} v}+\CurlyBrackets{\RoundBrackets{\chi_j^{l-1,l}-1}\mathcal{G}_{j,l} v + \chi_j^{l-1,l}z_j}\coloneqq z_j'+z_j''.
  \]
  Recalling the definition of $\chi_j^{l-1,l}$, we have $z_j''\in V_{j,l}$.
  Then, combining \cref{eq:G-j-infty,eq:G-j-l}, we obtain
  \[
    a(z_j,z_j'')+s(\pi z_j,\pi z_j'')=0.
  \]
  The techniques for estimating $a(z_j,z_j')+s(\pi z_j,\pi z_j'')$ are essentially the same as in the proof of \cref{lem:step1}:
  \[
    \norm{z_j}_a^2+\norm{\pi z_j}_s^2=a(z_j,z_j')+s(\pi z_j,\pi z_j') \leq \norm{z_j}_a \norm{z_j'}_a+\norm{\pi z_j}_s\norm{\pi z_j'}_s;
  \]
  \begin{align*}
    \norm{z_j'}_a & = \norm{\RoundBrackets{1-\chi_j^{l-1,l}}\mathcal{G}_{j,\infty} v}_a \leq \norm{\mathcal{G}_{j,\infty} v}_{a(\Omega\setminus K_{j,l-1})}+\norm{\mathcal{G}_{j,\infty} v}_{s(\Omega\setminus K_{j,l-1})} \\
                  & \leq \RoundBrackets{1+\frac{1}{\sqrt{\Lambda}}}\norm{\mathcal{G}_{j,\infty} v}_{a(\Omega\setminus K_{j,l-1})}+\norm{\pi \mathcal{G}_{j,\infty} v}_{s(\Omega\setminus K_{j,l-1})};
  \end{align*}
  \begin{align*}
    \norm{\pi z_j'}_s & =\norm{\pi\SquareBrackets{\RoundBrackets{1-\chi_j^{l-1,l}}\mathcal{G}_{j,\infty} v}}_s\leq \norm{\RoundBrackets{1-\chi_j^{l-1,l}}\mathcal{G}_{j,\infty} v}_s \leq \norm{\mathcal{G}_{j,\infty} v}_{s(\Omega \setminus K_{j,l-1})} \\
                      & \leq \frac{1}{\sqrt{\Lambda}}\norm{\mathcal{G}_{j,\infty} v}_{a(\Omega\setminus K_{j,l-1})}+\norm{\pi \mathcal{G}_{j,\infty} v}_{s(\Omega\setminus K_{j,l-1})}.
  \end{align*}
  We then obtain
  \begin{align*}
     & \quad \norm{z_j}_a^2+\norm{\pi z_j}_s^2=a(z_j,z_j')+s(\pi z_j,\pi z_j')                                                                                                                                                                              \\
     & \leq \CurlyBrackets*{\norm{z_j'}_a^2+\norm{\pi z_j'}_s^2}^{1/2}\CurlyBrackets*{\norm{z_j}_a^2+\norm{\pi z_j}_s^2}^{1/2}                                                                                                                              \\
     & \leq \CurlyBrackets*{ c_{\star}\RoundBrackets{\norm{\mathcal{G}_{j,\infty} v}_{a(\Omega\setminus K_{j,l-1})}^2+\norm{\pi \mathcal{G}_{j,\infty} v}_{s(\Omega\setminus K_{j,l-1})}^2}}^{1/2}\CurlyBrackets*{\norm{z_j}_a^2+\norm{\pi z_j}_s^2}^{1/2},
  \end{align*}
  where the constant $c_{\star}$ is obtained by choosing $x\in [0, \pi/2]$ such that
  \begin{align*}
    \cos(x) & = \frac{\norm{\mathcal{G}_{j,\infty} v}_{a\RoundBrackets{\Omega \setminus K_{j,l-1}}}}{\sqrt{\norm{\mathcal{G}_{j,\infty} v}_{a\RoundBrackets{\Omega \setminus K_{j,l-1}}}^2+\norm{\pi \mathcal{G}_{j,\infty} v}_{s\RoundBrackets{\Omega \setminus K_{j,l-1}}}^2}},     \\
    \sin(x) & = \frac{\norm{\pi \mathcal{G}_{j,\infty} v}_{s\RoundBrackets{\Omega \setminus K_{j,l-1}}}}{\sqrt{\norm{\mathcal{G}_{j,\infty} v}_{a\RoundBrackets{\Omega \setminus K_{j,l-1}}}^2+\norm{\pi \mathcal{G}_{j,\infty} v}_{s\RoundBrackets{\Omega \setminus K_{j,l-1}}}^2}}.
  \end{align*}
  Combining the result in \cref{lem:step1}, we hence complete this proof.
\end{proof}

The third step relies on the capacity assumption: There exists a positive constant $C_\mathup{ol}$ such that for all $K_j\in \mathcal{T}^H$ and $l > 0$,
\[
  \Card\CurlyBrackets{K \in \mathcal{T}^H\colon K \subset K_i^m} \leq C_\mathup{ol} l^d.
\]
This condition implies that the coarse partition is not overly irregular, a property that naturally holds for regular triangulations.

\begin{lemma}[The third step]
  \label{lem:step3}
  Keep the notations, then
  \[
    \norm{\RoundBrackets{\mathcal{G}_\infty-\mathcal{G}_{l}}v}_a^2+\norm{\pi \RoundBrackets{\mathcal{G}_\infty-\mathcal{G}_{l}}v}_s^2\leq c_\star^2 C_{\mathup{ol}} \theta^{l-1}(l+1)^d \norm{\pi v}_{s}^2,
  \]
  for any $v \in L^2(\Omega)$.
\end{lemma}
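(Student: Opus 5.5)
The plan is the standard superposition argument from LOD/CEM analyses: build a global cutoff-based test function, then split it into local pieces to which the second-step and first-step estimates apply. Write $z\coloneqq(\mathcal{G}_\infty-\mathcal{G}_l)v=\sum_j z_j$ with $z_j=(\mathcal{G}_{j,\infty}-\mathcal{G}_{j,l})v_j$ and $v_j=v|_{K_j}$. Then
\[
  \norm{z}_a^2+\norm{\pi z}_s^2=\sum_j \bigl(a(z,z_j)+s(\pi z,\pi z_j)\bigr).
\]
For each $j$ I would use the cutoff $\chi_j^{l+1,l+2}$ (one layer beyond $K_{j,l}$). The function $(1-\chi_j^{l+1,l+2})z$ is supported away from $K_{j,l+1}$, while $z_j$ vanishes outside $K_{j,l}$ only in its $\mathcal{G}_{j,l}$ part. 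The cleaner route is the dual one: the global problem \cref{eq:G-j-infty} tested with any $w\in V$ supported outside $K_j$ gives $a(\mathcal{G}_{j,\infty}v_j,w)+s(\pi\mathcal{G}_{j,\infty}v_j,\pi w)=0$, since $\pi v_j$ lives only on $K_j$. The local problem \cref{eq:G-j-l} gives the same for $w\in V_{j,l}$ supported outside $K_j$. Hence
\[
  a(z_j,w)+s(\pi z_j,\pi w)=0
\]
for all $w\in V_{j,l}$ vanishing on $K_j$. Taking $w=(1-\chi_j^{l+1,l+2})z$ is not in $V_{j,l}$, however, so instead I take $w=(1-\chi_j^{l+1,l+2})z$ and split it off: outside $K_{j,l}$ we have $z_j=\mathcal{G}_{j,\infty}v_j$, and the global orthogonality applies because $w$ vanishes on $K_{j}$. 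This gives
\[
  a(z_j,z)+s(\pi z_j,\pi z)=a\bigl(z_j,\chi_j^{l+1,l+2}z\bigr)+s\bigl(\pi z_j,\pi(\chi_j^{l+1,l+2}z)\bigr).
\]

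Bounding the right-hand side by Cauchy--Schwarz gives
\[
  \bigl(\norm{z_j}_a^2+\norm{\pi z_j}_s^2\bigr)^{1/2}\bigl(\norm{\chi_j^{l+1,l+2} z}_a^2+\norm{\pi(\chi_j^{l+1,l+2} z)}_s^2\bigr)^{1/2}.
\]
The second factor is estimated exactly as in \cref{lem:step2}, using $\kappa|\nabla\chi|^2\le\tilde\kappa$, the $s$-stability of $\pi$, and $\norm{z}_{s(\omega)}\le \Lambda^{-1/2}\norm{z}_{a(\omega)}+\norm{\pi z}_{s(\omega)}$. This yields at most $c_\star^{1/2}\bigl(\norm{z}_{a(K_{j,l+2})}^2+\norm{\pi z}_{s(K_{j,l+2})}^2\bigr)^{1/2}$, up to the same trigonometric maximisation. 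For the first factor, \cref{lem:step2} gives $c_\star\theta^{l-1}(\norm{\mathcal{G}_{j,\infty}v_j}_a^2+\norm{\pi\mathcal{G}_{j,\infty}v_j}_s^2)$. Testing \cref{eq:G-j-infty} with $\mathcal{G}_{j,\infty}v_j$ itself gives $\norm{\mathcal{G}_{j,\infty}v_j}_a^2+\norm{\pi\mathcal{G}_{j,\infty}v_j}_s^2\le\norm{\pi v_j}_{s}^2$.

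Summing over $j$ and applying discrete Cauchy--Schwarz gives
\[
  \norm{z}_a^2+\norm{\pi z}_s^2\le c_\star\theta^{(l-1)/2}\Bigl(\sum_j\norm{\pi v_j}_s^2\Bigr)^{1/2}\Bigl(c_\star\sum_j\bigl(\norm{z}_{a(K_{j,l+2})}^2+\norm{\pi z}_{s(K_{j,l+2})}^2\bigr)\Bigr)^{1/2}.
\]
The overlap count is where the capacity assumption enters: each coarse element lies in at most $C_{\mathup{ol}}(l+c)^d$ of the patches, so the last sum is at most $C_{\mathup{ol}}(l+1)^d(\norm{z}_a^2+\norm{\pi z}_s^2)$ up to harmless constant changes in the layer index. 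Dividing and squaring then gives $c_\star^2C_{\mathup{ol}}\theta^{l-1}(l+1)^d\norm{\pi v}_s^2$.

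I expect the main obstacle to be the orthogonality step. It must be checked carefully that $(1-\chi)z$ is a legitimate test function for both the global and the local equations; for the local one this means verifying that $\mathcal{G}_{j,l}v_j$ contributes nothing where $1-\chi\neq0$, which is the reason for choosing the cutoff index beyond $l$. The other delicate point is matching the patch index to $(l+1)^d$. If the choice $\chi_j^{l+1,l+2}$ produces $(l+2)^d$, I would fall back on $\chi_j^{l,l+1}$ and absorb the boundary layer using the fact that $z_j=\mathcal{G}_{j,\infty}v_j$ there.
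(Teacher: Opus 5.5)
Your proposal is essentially the paper's proof: the same splitting of $z=\sum_j z_j$ with a cutoff, orthogonality of $z_j$ to the far part (via the support of $\mathcal{G}_{j,l}v_j$ and the global equation, whose right-hand side vanishes off $K_j$), the \cref{lem:step2}-type bound on the near part, the overlap count from $C_{\mathup{ol}}$, and \cref{lem:step2} together with $\norm{\mathcal{G}_{j,\infty}v_j}_a^2+\norm{\pi\mathcal{G}_{j,\infty}v_j}_s^2\le\norm{\pi v_j}_s^2$. Two corrections: the paper uses your fallback $\chi_j^{l,l+1}$ directly, and it needs no extra layer or boundary-layer absorption, since $1-\chi_j^{l,l+1}$ already vanishes on $K_{j,l}$, outside of which $\mathcal{G}_{j,l}v_j$ and $\pi\mathcal{G}_{j,l}v_j$ are zero ($\pi$ acts coarse-element-wise), and this choice is what gives exactly $(l+1)^d$; also, the first factor in your summed inequality should be $c_\star^{1/2}\theta^{(l-1)/2}$ rather than $c_\star\theta^{(l-1)/2}$, otherwise you would end with $c_\star^3$ instead of $c_\star^2$.
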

\begin{proof}
  Still take $z_j \coloneqq \RoundBrackets{\mathcal{G}_{j,\infty}-\mathcal{G}_{j,l}}v|_{K_j}$ and $z=\sum_{j} z_j$, and decompose $z$ as
  \[
    z=\CurlyBrackets{\RoundBrackets{1-\chi_j^{l,l+1}}z}+\CurlyBrackets{\chi_j^{l,l+1}z}\coloneqq z'+z''.
  \]
  Noting that $\Supp\RoundBrackets{z'} \subset \Omega\setminus K_i^{m}$, $\Supp \RoundBrackets{\pi z'} \subset \Omega\setminus K_i^{m}$, $\Supp\RoundBrackets{\mathcal{G}_{j,l} v} \subset \overline{\RoundBrackets{K_i^m}}$ and $\Supp\RoundBrackets{\pi \mathcal{G}_{j,l} v} \subset \overline{\RoundBrackets{K_i^m}}$, we have
  \[
    a(\mathcal{G}_{j,l} v, z')+s(\pi\mathcal{G}_{j,l} v, \pi z')=0
  \]
  and
  \[
    a(\mathcal{G}_{j,\infty} v, z')+s(\pi\mathcal{G}_{j,\infty} v, \pi z')=0,
  \]
  which leads to
  \[
    a(z_j,z')+s(\pi z_j, \pi z')=0.
  \]
  Use similar techniques in the proof of \cref{lem:step2}:
  \[
    a(z_j, z)+s(\pi z_j, \pi z)=a(z_j, z'')+s(\pi z_j, \pi z'') \leq \norm{z_j}_a\norm{z''}_a+\norm{\pi z_j}_s\norm{\pi z''}_s;
  \]
  \begin{align*}
    \norm{z''}_a & =\norm{\chi_j^{l,l+1}z}_a\leq \norm{z}_{a\RoundBrackets{K_{j,l+1}}}+\norm{z}_{s\RoundBrackets{K_{j,l+1}}}                       \\
                 & \leq \RoundBrackets{1+\frac{1}{\sqrt{\Lambda}}}\norm{z}_{a\RoundBrackets{K_{j,l+1}}}+\norm{\pi z}_{s\RoundBrackets{K_{j,l+1}}};
  \end{align*}
  \begin{align*}
    \norm{\pi z''}_s & = \norm{\pi \RoundBrackets{\chi_j^{l,l+1}z}}_s \leq \norm{\chi_j^{l,l+1}z}_s \leq \norm{z}_{K_{j,l+1}}        \\
                     & \leq \frac{1}{\sqrt{\Lambda}}\norm{z}_{a\RoundBrackets{K_{j,l+1}}}+\norm{\pi z}_{s\RoundBrackets{K_{j,l+1}}}.
  \end{align*}
  We hence obtain
  \[
    a(z_j,z)+s(\pi z_j, \pi z)\leq \CurlyBrackets*{c_\star\RoundBrackets{\norm{z}_{a\RoundBrackets{K_{j,l+1}}}^2+\norm{\pi z}_{s\RoundBrackets{K_{j,l+1}}}^2}}^{1/2}\CurlyBrackets*{\norm{z_j}_a^2+\norm{\pi z_j}_s^2}^{1/2}.
  \]
  Recalling the definition of $C_\mathup{ol}$, it is easy to show
  \[
    \sum_j \norm{z}_{a\RoundBrackets{K_{j,l+1}}}^2+\norm{\pi z}_{s\RoundBrackets{K_{j,l+1}}}^2\leq C_\mathup{ol}\RoundBrackets{m+1}^d\RoundBrackets{\norm{z}_a^2+\norm{\pi z}_s^2}.
  \]
  Then by the Cauchy--Schwarz inequality, we get
  \begin{align*}
    \norm{z}_a^2+\norm{\pi z}_s^2 & =\sum_j a(z_j,z)+s(\pi z_j,\pi z)                                                                                                                               \\
                                  & \leq \CurlyBrackets*{c_\star C_\mathup{ol} \RoundBrackets{\norm{z}_a^2+\norm{\pi z}_s^2}}^{1/2}\CurlyBrackets*{\sum_j \norm{z_j}_a^2+\norm{\pi z_j}_s^2}^{1/2}.
  \end{align*}
  Combining \cref{lem:step2} and
  \[
    \norm{\mathcal{G}_{j,\infty} v|_{K_j}}_a^2+\norm{\pi \mathcal{G}_{j,\infty} v|_{K_j}}_s^2=s(\pi v|_{K_j}, \pi \mathcal{G}_{j,\infty}v|_{K_j}),
  \]
  which leads to
  \[
    \norm{\mathcal{G}_{j,\infty} v|_{K_j}}_a^2+\norm{\pi \mathcal{G}_{j,\infty} v|_{K_j}}_s^2 \leq \norm{\pi v|_{K_j}}_s^2.
  \]
  We hence complete the proof.
\end{proof}

\subsection{The error estimate for the localized multiscale space}
We are now ready to establish the error estimate for the localized multiscale space.
Let $u_{H,l}^\mathup{ms}$ be the multiscale solution derived from the localized multiscale space $V_{H,l}$.
From the Galerkin orthogonality, we have $\norm{u-u_{H,l}^\mathup{ms}}_a \leq \norm{u-v_{H,l}}_a$ for any $v_{H,l} \in V_{H,l}$.
According to the definition for $V_{H,l}$, we can take $v_{H,l}=\mathcal{G}_{l}v$, where $v$ will be chosen later.
Then, take a splitting as
\[
  \norm{u-v_{H,l}}_a \leq \norm{u-u_{H,\infty}^\mathup{ms}}_a + \norm{u_{H,\infty}^\mathup{ms}-v_{H,l}}_a,
\]
and the first term is already estimated in \cref{thm:global}.
Again, we know that $u_{H,\infty}^\mathup{ms}=\mathcal{G}_\infty w$, and thus a proper choice of $v$ as $w$ for $v_{H,l}$ can be made.
The second term can be estimated by \cref{lem:step3} as follows:
\[
  \norm{u_{H,\infty}^\mathup{ms}-v_{H,l}}_a=\norm{\mathcal{G}_{\infty} w-\mathcal{G}_{l} w}_a \leq c_\star \sqrt{C_\mathup{ol}}\theta^{(l-1)/2} (l+1)^{d/2}\norm{\pi w}_s.
\]
The remaining task is to bound $\norm{\pi w}_s$ with $\norm{f}_{s^*}$.
We need the following lemma, which is in some sense an inverse inequality for the auxiliary space $V_{H,\infty}$:
\begin{lemma}[Inverse inequality]
  \label{lem:interpolation}
  There exists a bounded map $\mathcal{Q}_H\colon L^2(\Omega) \rightarrow V$ and a positive constant $C_\mathup{inv}$ such that for all $v \in L^2(\Omega)$, it holds that $\pi \mathcal{Q}_H v=\pi v$ and $\norm{\mathcal{Q}_H v}_{\tilde{a}} \leq C_\mathup{inv} \norm{\pi v}_{s}$.
  Moreover, for each coarse element $K_j$, $\mathcal{Q}_Hv|_{K_j}$ depends only on the data of $v$ in $K_j$ and vanishes on $\partial K_j$.
\end{lemma}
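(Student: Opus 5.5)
The construction is local: I build $\mathcal{Q}_H$ coarse element by coarse element by solving a constrained energy minimization on each $K_j$ with zero Dirichlet data. Fix $K_j$ and $v\in L^2(\Omega)$, and write $\pi_j v|_{K_j}=\sum_{k'} c_{k'}\Phi_{j,k'}$. The coefficients are $c_{k'}=s_j(v,\Phi_{j,k'})$, assuming $s_j$-orthonormal eigenfunctions. I define $\mathcal{Q}_H v|_{K_j}\in H^1_0(K_j)$ as the minimizer of $\norm{w}_{a_j}^2$ subject to $s_j(w,\Phi_{j,k'})=c_{k'}$ for all retained $k'$. Extending each piece by zero gives a function in $V$, since the pieces vanish on every $\partial K_j$. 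By construction, $\mathcal{Q}_H v|_{K_j}$ depends only on $v|_{K_j}$, and $\pi\mathcal{Q}_H v=\pi v$. Linearity and boundedness follow once the local problem is shown to be well posed. Here I read $\norm{\cdot}_{\tilde a}$ as the energy norm $\norm{\cdot}_a$, possibly augmented by the $s$-norm. In the latter case $\norm{\mathcal{Q}_Hv}_s$ is controlled by $\norm{\mathcal{Q}_H v}_a$ through a Friedrichs-type argument on $H^1_0(K_j)$ weighted by $\tilde\kappa$, or it can be absorbed as in the proofs of \cref{lem:step2,lem:step3}.

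Well-posedness of the constrained problem amounts to surjectivity of the map $T_j\colon H^1_0(K_j)\to\mathbb{R}^{N_j}$, $w\mapsto (s_j(w,\Phi_{j,k'}))_{k'}$. This holds because the $\Phi_{j,k'}$ are linearly independent in $L^2(K_j)$ and $H^1_0(K_j)$ is dense in $L^2(K_j)$. Once $T_j$ is surjective, the minimizer is $w=\sum_{k'}\beta_{k'}\eta_{k'}$, where $\eta_{k'}$ are the $a_j$-minimal-energy preimages of the unit vectors. This gives
\[
  \norm{\mathcal{Q}_H v}_{a_j}^2 = \beta^\intercal M_j^{-1}\beta,
\]
with $M_j$ the Gram matrix of the Riesz representers of the constraint functionals in $(H^1_0(K_j),a_j)$. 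The target bound $\norm{\mathcal{Q}_Hv}_{a_j}^2\le C_\mathup{inv}^2\norm{\pi v}_{s_j}^2=C_\mathup{inv}^2\abs{c}^2$ is therefore exactly a lower bound on the smallest eigenvalue of $M_j$, uniform in $j$. Summing over $j$ then gives the global estimate.

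The main obstacle is making $C_\mathup{inv}$ independent of the contrast of $\kappa$ and of $H$. My first attempt is a direct test function. I take $w=\sum_{k'}c_{k'}\zeta_j\Phi_{j,k'}$ plus a small correction, where $\zeta_j\in H^1_0(K_j)$ is a bubble-type cutoff equal to one away from a boundary layer. This satisfies $\norm{\zeta_j\Phi}_{a_j}\lesssim \norm{\Phi}_{a_j}+\norm{\Phi}_{s_j}\le(\sqrt{\lambda_{j,k'}}+1)\norm{\Phi}_{s_j}$. The estimate uses $\kappa\abs{\nabla\zeta_j}^2\le\tilde\kappa$, the same device as in \cref{lem:step1}, and it is exactly where the proportionality $\tilde\kappa\propto\kappa$ keeps things contrast-robust.

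The correction must restore the constraints. Its size is governed by the perturbation matrix $(s_j((1-\zeta_j)\Phi_{j,k},\Phi_{j,k'}))_{k,k'}$. Contrast-independent smallness of this matrix is where I expect genuine difficulty: it requires that the eigenfunctions not concentrate their $\tilde\kappa$-mass near $\partial K_j$, for instance in high-conductivity channels touching the boundary. If that fails, I would fall back on stating $C_\mathup{inv}$ as the a priori constant $\max_j \lambda_{\min}(M_j)^{-1/2}$. That constant is finite by the surjectivity argument and depends only on the local spectral data and mesh regularity. This is, I believe, how the original CEM-GMsFEM analysis states this assumption-type constant.
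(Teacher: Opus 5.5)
Your proof is correct for the lemma as stated. This review gives no proof of the lemma; it is imported from the original CEM-GMsFEM analysis. Your construction is exactly the one used there: $a_j$-minimization over $H^1_0(K_j)$ under the constraints $s_j(w,\Phi_{j,k'})=s_j(v,\Phi_{j,k'})$, followed by zero extension.

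The statement only asserts that some $C_\mathup{inv}$ exists. It claims no independence from the contrast or from $H$, and the paper itself says after \cref{thm:local} that the influence of the contrast on $C_\mathup{inv}$ remains unclear. So what you present as a fallback, $C_\mathup{inv}=\max_j\lambda_{\min}(M_j)^{-1/2}$, is already a complete proof. One small fix: in the Gram identity, $\beta$ should read $c$.

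The bubble-plus-correction step is a detour you do not need. The quantity $\sup_{w\in H^1_0(K_j)} s_j(w,\mu)/\norm{w}_{a_j}$ is the $a_j$-norm of the Riesz representer of $s_j(\cdot,\mu)$. Hence $\lambda_{\min}(M_j)^{1/2}=\inf_{\mu\in V_j^{\mathup{aux}}}\sup_{w} s_j(w,\mu)/(\norm{w}_{a_j}\norm{\mu}_{s_j})$. A lower bound on this inf-sup constant needs only one good test function per $\mu$, with no constraint to restore.

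Take $w=B\mu$, where the bubble $B$ satisfies $0\le B\le 1$, $B>0$ in $K_j$, $B=0$ on $\partial K_j$, and $\kappa\abs{\nabla B}^2\le\tilde\kappa$. This gives $s_j(B\mu,\mu)\ge C_\pi\norm{\mu}_{s_j}^2$ and $\norm{B\mu}_{a_j}\le(\max_{k'}\lambda_{j,k'}^{1/2}+1)\norm{\mu}_{s_j}$. Here $C_\pi\coloneqq\min_j\inf_{0\ne\mu\in V_j^{\mathup{aux}}}\int_{K_j}\tilde\kappa B\mu^2\,\di x\big/\int_{K_j}\tilde\kappa\mu^2\,\di x$, which is positive simply by finite dimensionality and $B>0$. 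Hence $C_\mathup{inv}\le(\max_{j,k'}\lambda_{j,k'}^{1/2}+1)/C_\pi$.

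This is how the original analysis quantifies the constant. $C_\pi$ is precisely the measure of eigenfunction $\tilde\kappa$-mass concentrating near $\partial K_j$ that you identify as the obstacle to contrast robustness, and it is left there as an a priori constant as well. No smallness of your perturbation matrix is ever needed.
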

This lemma actually says that it is permitting to assume $w \in V$.
Then, according to the variational formulation for $\mathcal{G}_\infty$, we have
\[
  \norm{\pi w}_s^2 = a(\mathcal{G}_\infty w, w) + s(\pi \mathcal{G}_\infty w, \pi w).
\]
Through the Cauchy--Schwarz inequality, and the property that $\norm{w}_a \leq C_\mathup{inv}\norm{\pi w}_s$, we can derive
\[
  \norm{\pi w}_s^2 \leq C(C_\mathup{inv})\CurlyBrackets*{\norm{\mathcal{G}_\infty w}_a^2 + \norm{\pi \mathcal{G}_\infty w}_s^2}.
\]
By the Poincar\'{e} inequality, the term $\norm{\pi \mathcal{G}_\infty w}_s$ can be bounded aspects
\[
  \norm{\pi \mathcal{G}_\infty w}_s \leq \norm{\mathcal{G}_\infty w}_s \leq C_\mathup{po} H^{-1} \norm{\mathcal{G}_\infty w}_a.
\]
Finally, for $\mathcal{G}_\infty w$, we can utilize the variational form for $u_{H,\infty}^\mathup{ms}$, which gives
\[
  \norm{u_{H,\infty}^\mathup{ms}}_a^2 \leq \norm{f}_{s^*} \norm{u_{H,\infty}^\mathup{ms}}_s \leq C_\mathup{po} H^{-1} \norm{f}_{s^*} \norm{u_{H,\infty}^\mathup{ms}}_a.
\]
Summarize all the above discussions, we can conclude the error estimate for the localized multiscale space as follows:
\begin{theorem}
  \label{thm:local}
  Let \( u \) be the solution of \cref{eq:model} and \( u_{H,l}^\mathup{ms} \) be the multiscale solution obtained from the localized multiscale space $V_{H,l}$. Then, it holds that
  \[
    \norm{u - u_{H,l}^\mathup{ms}}_a \leq C_* \RoundBrackets*{1+ H^{-2}(l+1)^{d/2}\theta^{(l-1)/2}} \norm{f}_{s^*},
  \]
  where the positive constant $C_*$ and $\theta$ is independent of $H$ and $l$ with $\theta < 1$.
\end{theorem}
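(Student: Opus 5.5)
The proof combines the quasi-optimality of the Galerkin projection onto $V_{H,l}$ with the triangle inequality
\[
  \norm{u-u_{H,l}^\mathup{ms}}_a \le \norm{u-u_{H,\infty}^\mathup{ms}}_a+\norm{\mathcal{G}_\infty w-\mathcal{G}_l w}_a ,
\]
where $w$ is chosen so that $u_{H,\infty}^\mathup{ms}=\mathcal{G}_\infty w$. Such a $w$ exists because $V_{H,\infty}=\Image(\mathcal{G}_\infty)$.

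\textbf{First term.} \cref{thm:global} bounds it by $\Lambda^{-1/2}\norm{f}_{s^*}$, which is absorbed into $C_*$.

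\textbf{Second term.} \cref{lem:step3} gives
\[
  \norm{\mathcal{G}_\infty w-\mathcal{G}_l w}_a \le c_\star\sqrt{C_\mathup{ol}}\,\theta^{(l-1)/2}(l+1)^{d/2}\norm{\pi w}_s .
\]
It therefore remains to bound $\norm{\pi w}_s$ by $H^{-2}\norm{f}_{s^*}$ times a constant.

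\textbf{Making $w$ admissible as a test function.} Since $\mathcal{G}_\infty$ depends on $w$ only through $\pi w$, I replace $w$ by $\mathcal{Q}_H w$ from \cref{lem:interpolation}. Then $\pi\mathcal{Q}_H w=\pi w$, so $\mathcal{G}_\infty\mathcal{Q}_H w=\mathcal{G}_\infty w$ and $\norm{\pi\mathcal{Q}_Hw}_s=\norm{\pi w}_s$. Moreover $\mathcal{Q}_Hw\in V$ with $\norm{\mathcal{Q}_Hw}_a\le C_\mathup{inv}\norm{\pi w}_s$; here I use that the $\tilde a$-norm dominates the $a$-norm up to a constant, which I take as part of that lemma. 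So I may assume $w\in V$.

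Testing the defining equation of $\mathcal{G}_\infty$ with $w$ itself gives
\[
  \norm{\pi w}_s^2=a(\mathcal{G}_\infty w,w)+s(\pi\mathcal{G}_\infty w,\pi w).
\]
Cauchy--Schwarz then yields
\[
  \norm{\pi w}_s^2\le \bigl(\norm{\mathcal{G}_\infty w}_a^2+\norm{\pi\mathcal{G}_\infty w}_s^2\bigr)^{1/2}\bigl(C_\mathup{inv}^2+1\bigr)^{1/2}\norm{\pi w}_s .
\]
Dividing by $\norm{\pi w}_s$, we get $\norm{\pi w}_s\le C\bigl(\norm{u_{H,\infty}^\mathup{ms}}_a+\norm{\pi u_{H,\infty}^\mathup{ms}}_s\bigr)$.

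\textbf{Bounding the global multiscale solution.} The $\pi$ part is controlled by the $s$-norm, using that $\pi$ is an $s$-orthogonal projection. The Poincaré inequality on $V=H^1_0(\Omega)$ with weight $\tilde\kappa\propto\kappa H^{-2}$ then gives
\[
  \norm{v}_s\le C_\mathup{po}H^{-1}\norm{v}_a .
\]
Hence $\norm{\pi w}_s\le C(1+C_\mathup{po}H^{-1})\norm{u_{H,\infty}^\mathup{ms}}_a$. Testing the Galerkin equation for $u_{H,\infty}^\mathup{ms}$ with itself gives
\[
  \norm{u_{H,\infty}^\mathup{ms}}_a^2\le\norm{f}_{s^*}\norm{u_{H,\infty}^\mathup{ms}}_s\le C_\mathup{po}H^{-1}\norm{f}_{s^*}\norm{u_{H,\infty}^\mathup{ms}}_a ,
\]
so $\norm{u_{H,\infty}^\mathup{ms}}_a\le C_\mathup{po}H^{-1}\norm{f}_{s^*}$. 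Multiplying the two factors of $H^{-1}$ (for $H\le1$) gives $\norm{\pi w}_s\le CH^{-2}\norm{f}_{s^*}$. Inserting this into the second term and collecting constants yields the stated bound with $C_*$ independent of $H$ and $l$.

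\textbf{Main obstacle.} The delicate step is the reduction to $w\in V$ via $\mathcal{Q}_H$ and the resulting bound of $\norm{\pi w}_s$ in terms of $\mathcal{G}_\infty w$. One must check that $C_\mathup{inv}$ does not carry hidden $H$- or contrast-dependence that spoils the $H^{-2}$ power. One must also check that the weighted Poincaré constant $C_\mathup{po}$ is contrast-robust; since the weight in $s$ is proportional to $\kappa$, this may require $C_\mathup{po}$ to be allowed to depend on $\kappa$. If it cannot be avoided, I would absorb that dependence into $C_*$ and remark on it.
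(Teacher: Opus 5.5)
Your proposal is correct and follows the paper's argument essentially step for step. The shared chain is: Galerkin quasi-optimality plus the triangle inequality through $u_{H,\infty}^{\mathup{ms}}=\mathcal{G}_\infty w$; \cref{thm:global} for the first term and \cref{lem:step3} for the second; the reduction to $w\in V$ via $\mathcal{Q}_H$; testing the $\mathcal{G}_\infty$ equation with $w$ and applying Cauchy--Schwarz with $C_\mathup{inv}$; and two uses of the weighted Poincar\'e inequality, which produce the $H^{-2}$ factor. Your caveat about possible contrast dependence of $C_\mathup{inv}$ and $C_\mathup{po}$ matches the paper's own remark after the theorem.
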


To overcome the error resulting from the factor \(H^{-2}\), the oversampling layers should be chosen as \(\bigO(\abs{\log H})\).
From the analysis, it is evident that the constant \(C_*\) depends on \(C_\mathup{inv}\), \(C_\mathup{po}\), and \(\Lambda\).
Under certain assumptions, \(\Lambda\) can be shown to be independent of the contrasts; however, the influence of contrasts on \(C_\mathup{inv}\) and \(C_\mathup{po}\) remains unclear.
Since the weighted \(L^2\) norm is used in constructing these two constants, we conjecture that similar independence may hold for them as well.
Moreover, the critical decay rate, provided by \(\theta\), is independent of the contrasts.

\section{Application gallery}
\label{sec:gallery}
In this section, we present several applications of CEM-GMsFEMs from the community.
We categorize these publications into several groups, although the classification is not strictly orthogonal. Special emphasis is placed on modifications to the original CEM-GMsFEM framework.

\subsection{Elasticity problems}
One obvious extension of CEM-GMsFEMs is to consider elasticity. {The classical linear elasticity problem is given by
\begin{equation}\label{eq:ela}
- \nabla \cdot \sigma(u) = f \quad \text{in } D\subset\mathbb{R}^{d},
\end{equation}
with homogeneous Dirichlet boundary condition $u = 0$ on $\partial D$, where $u$ is the displacement field and
\[
\sigma(u) = 2\mu \, \varepsilon(u) + \lambda (\nabla \cdot u) \mathcal{I},
\qquad
\varepsilon(u) = \frac{1}{2} \left( \nabla u + (\nabla u)^T \right).
\]
Here $\lambda>0$ and $\mu>0$ are heterogeneous Lamé parameters, $\mathcal{I}$ is the identity tensor.}
In \cite{Fu2018}, the authors proposed a CEM-GMsFEM for elasticity problems \cref{eq:ela} , demonstrating contrast robustness regarding Young's modulus.
For elasticity problems, the left-hand bilinear form in eigenvalue problems \cref{eq:eigenvalue} should be replaced by the elasticity energy form, while the right-hand side can be set as $(\lambda+2\mu) H^{-2}$, where \(\lambda\) and \(\mu\) are the Lam\'{e} parameters.
We emphasize that, according to the Korn inequality, the elasticity energy form is semi-definite with the kernel corresponding to the space of rigid body motions.
Therefore, for 2D problems, achieving contrast robustness requires at least $4$ eigenvectors, and for 3D problems, at least $6$ eigenvectors are needed. {The framework \cref{eq:ela} has also been extended to poroelasticity problems, where the displacement field $u$ is coupled with a scalar pressure field $p: [0,T]\times D\rightarrow\mathbb{R}$. 
A typical linear poroelastic system reads
\begin{equation}\label{eq:proela}
\begin{cases}
- \nabla \cdot \sigma(u) + \alpha \nabla p = 0,  &\quad \text{in } (0,T]\times D,\\
M^{-1} \partial_t p + \alpha \nabla \cdot \partial_t u - \nabla \cdot (\frac{\kappa}{\nu} \nabla p) = f &\quad \text{in } (0,T]\times D,
\end{cases}
\end{equation}
where $\alpha$ is the Biot coefficient, $M$ is the Biot modulus, $\nu$ is the fluid viscosity and $\kappa$ denotes heterogeneous permeability. }In \cite{Fu2019}, the authors considered poroelasticity problems \cref{eq:proela} with homogeneous Dirichlet boundary conditions.
In poroelasticity, a scalar field---pressure---is incorporated into the model, analogous to the thermal-elasticity model.
As the system is coupled, the authors proposed two different types of multiscale spaces: one for the pressure to capture conductivity heterogeneity and the other for the displacement to address mechanical heterogeneity.
Note that the construction of those multiscale spaces is decoupled, and the multiscale solution is obtained by solving the variational problem in these multiscale spaces.
The extension to nonlinear poroelasticity is also discussed in \cite{Fu2020}; due to the nonlinearity, the update of multiscale bases is required in each time step.

\subsection{Alternative discretizations}
Discretizations beyond the finite element methods are crucial when mass conservation is enforced or when incompressibility plays a role.
In \cite{Chung2018b}, the multiscale method is formulated using the mixed form of elliptic problems, meaning that a pair of $H(\Div,\Omega) \times L^2(\Omega)$ conforming multiscale spaces is constructed. {The authors consider a class of high-contrast flow problems in the following mixed form:
\begin{equation}\label{eq:mixed}
\text{div}\,u= f,\quad \kappa^{-1} u = - \nabla p,
\quad \text{in } D,
\end{equation}
subject to the homogeneous boundary condition $u \cdot \vec{n} = 0$ on $\partial D,$
where $\vec{n}$ is the outward unit normal vector on $\partial D$. The source function satisfies $\int_D f \, dx = 0.$}
For the pressure component {$p$ in \cref{eq:mixed}}, the multiscale space is essentially the eigenfunction space of \cref{eq:eigenvalue}, presented within the mixed formulation.
For the velocity component {$u$}, the multiscale space is obtained by solving the oversampled problem.
The mixed formulation offers the advantage that all degrees of freedom (DoFs) for eigenfunctions are disjoint, thus enabling the use of the Sherman--Morrison formula.
Moreover, in mixed formulations the homogeneous Neumann boundary condition is more natural for oversampling problems, as it results from the zero-extension operation of $H(\Div,\Omega)$ functions.
An adaptive strategy based on the mixed form is introduced in \cite{Chung2019a}, where, rather than remeshing the domain as in classical adaptive finite element methods, an enrichment of the multiscale bases through local residuals is proposed.
In \cite{Cheung2020}, CEM-GMsFEMs are extended to the IPDG framework.
In this case, the right-hand side of \cref{eq:eigenvalue} comes from the IPDG energy bilinear, and again, all DoFs on coarse subdomains are disjoint due to the DG setting.
Finally, targeting mixed form discretizations, the work in \cite{Cheung2022} presents a delicate method that combines the Richardson scheme with an iterative increase in oversample layers to solve the oversampling problem.

\subsection{Dynamic problems}
CEM-GMsFEMs are primarily designed for static operators; however, when applied to dynamic problems, the treatment of time discretization becomes crucial.
In \cite{Li2019}, the authors consider a typical parabolic equation, namely, $${u_t+\mathcal{A}u=f,}$$ using a first-order implicit time discretization.
Quasi-gas dynamics is studied as a model problem in \cite{Chetverushkin2021}, where an additional second-order time derivative term $\alpha u_{tt}$ is incorporated into the parabolic equation.
In both cases, the multiscale space is constructed from the elliptic operator $\mathcal{A}$ and is incorporated into the solver through the variational formulation for time discretization.
In \cite{Chung2020a}, the wave equation is reformulated as a first-order system, and the multiscale space naturally arises from the mixed formulation of the elliptic operator.
Note that even for explicit time discretization of wave equations, one still needs to solve linear systems associated with the mass matrix.
In \cite{Cheung2021}, an IPDG framework combining Petrov--Galerkin schemes is proposed for the wave equation, where the test space is constructed based on the eigenspace while the trial space is obtained from the oversampling procedure.
To avoid the inversion of the mass matrix, the right-hand side of the eigenvalue problem is taken as the mass bilinear form rather than the weighted $L^2$ form derived from the coefficient field.
The framework presented in \cite{Chung2020a} is then extended to parabolic equations in \cite{Wang2021}.
The technique, known as the partially explicit approach, is further formulated in \cite{Chung2021b,Chung2021c,Chung2022}.
The motivation behind this approach is that even when employing a multiscale space from CEM-GMsFEMs, denoted by $V_{H,1}$, for implicit time discretization, the accuracy may still deteriorate over long-time simulations.
To enhance accuracy, an enrichment $V_{H,2}$ is introduced to the multiscale space $V_{H,1}$, with dynamic updates for this component performed explicitly.
We emphasize again that, even for the explicit scheme, linear systems still need to be solved, and the dimension reduction on $V_{H,2}$ is achieved using the mass bilinear form.
Exponential integrators also play an important role in time discretization, and their integration with CEM-GMsFEMs is discussed in \cite{Poveda2024}.
{CEM-GMsFEM has recently been extended to Schr\"{o}dinger equations in \cite{Jin2025}, where the local eigenvalue problem is modified to incorporate the heterogeneity arising from the potential term. In particular, the multiscale basis construction is adapted to the Schr\"{o}dinger setting by employing a Hamiltonian-weighted spectral problem, which allows the method to capture the effects of highly oscillatory and high-contrast potentials.
To address the time-dependent problem, a Crank–Nicolson CEM-GMsFEM scheme was established, providing a stable and second-order accurate temporal discretization. Rigorous analysis demonstrates global convergence of the method, including first-order convergence in the energy norm and second-order convergence in the \( L^2 \) norm. Furthermore, the constructed multiscale basis functions exhibit exponential decay with respect to the oversampling size, ensuring localization and computational efficiency. Under suitable resolution conditions relating the coarse mesh size, spectral threshold, time step size, and semiclassical parameter, full space–time convergence is obtained. Numerical experiments in one and two dimensions with high-contrast and multiscale potentials validate the theoretical findings and demonstrate the robustness and efficiency of the proposed approach.} 
Dual continuum models are widely used in porous flow simulations, where the effects of fractures must be carefully addressed. In these models, an additional physical quantity representing flow in fractures is incorporated into the system.
Both fields exist over the entire domain and are coupled via a lower-order transfer term. In \cite{Cheung2018}, the authors proposed a CEM-GMsFEM for dual continuum models in which the eigenvalue problems are modified to account for the lower-order transfer term.
The problem still exhibits a coercivity property, so the remaining procedures can be naturally applied.
Publications \cite{Poveda2023,Poveda2023a} consider nonlinear compressible single-phase flows, where the nonlinearity arises from conductivity that depends on pressure.
To avoid updating the multiscale bases at every time step, the authors construct the multiscale space using the initial pressure.

\subsection{Non-local multi-continua upscaling}
One beautiful fruit grown from CEM-GMsFEMs is so-called non-local multi-continua (NLMC) upscaling.
This theory has its roots in the modeling of subsurface flow, where fractures are embedded in a porous medium. {For the model problem \cref{eq:model} in fractured media, the computational domain $D$ is decomposed into 
matrix and fracture regions:
\[
D = D_m  \bigoplus_{i=1}^{N_f}d_i D_{f,i}.
\]
where $D_m$ denotes the matrix region and $D_{f,i}$ represents the 
$i$-th fracture region. The quantity $d_i$ denotes the aperture (thickness) 
of the fracture $D_{f,i}$.The permeability field $\kappa(x)$ is defined piecewise by
\[
\kappa(x) =
\begin{cases}
\kappa_m(x), & x \in D_m, \\
\kappa_i(x), & x \in D_{f,i}, \quad i=1,\dots,N_f.
\end{cases}
\]}
Since the conductivity of fractures is significantly higher than that of the matrix, the flow in the fractures is much faster, implying that interactions within fracture networks are inherently non-local in contrast to the local interactions in the matrix.
Given that CEM-GMsFEMs are robust against contrast, it is natural to employ them to model such non-local interactions.
Moreover, CEM-GMsFEMs can provide an upscaling model \cite{Chung2018a} derived from the final linear algebraic system, where oversampling generates a denser matrix that elegantly represents nonlocality.
NLMC for nonlinear flow problems is discussed in \cite{Leung2019}, and as a simplified version of CEM-GMsFEM, the NLMC upscaling approach is also a computational method whose numerical convergence is studied in \cite{Zhao2020}.
The coupling of elasticity is considered in \cite{Vasilyeva2019}, leading to an NLMC upscaling for poroelasticity, while its extension to the dual continuum setting is discussed in \cite{Vasilyeva2019a}.
In \cite{Vasilyeva2019b}, the authors study perforated media where non-homogeneous boundary conditions are imposed on the inner boundaries of perforations.
For nonlinear problems, where linear systems must be solved repeatedly, \cite{Vasilyeva2020} proposes an acceleration technique using machine learning.
By incorporating the temporal dimension into the framework, a space-time NLMC upscaling is proposed in \cite{Hu2025}.
NLMC upscaling is first connected with representative volume elements in \cite{Chung2021}, the connection between homogenization theory and NLMC is further elucidated in \cite{Chung2024}.
In fact, traditional homogenization theory or FE\textsuperscript{2} (HMM) methods also involve solving several local problems subject to certain restrictions, which can be identified in the corresponding components of NLMC upscaling.

\subsection{Indefinite problems}
The original CEM-GMsFEM framework is designed for positive definite problems.
When the problem is indefinite---either non-Hermitian or non-positive definite--a careful treatment is required.
In particular, the eigenvalue problem must be reformulated, as eigenvector expansion for indefinite operators is highly non-trivial.
{For the convection-diffusion equations,
\begin{equation}\label{eq:con}
-\text{div} \big( \kappa \nabla u \big)
+ \textbf{b} \cdot \nabla u
= f
\quad \text{in } D,
\end{equation}
subject to the homogeneous Dirichlet boundary condition $u = 0$ on $\partial D$.} The indefiniteness arises from the convection term \(\bm{b}\cdot \nabla u\) in \cref{eq:con}, which can cause stability issues when it dominates the diffusion term.
In \cite{Chung2020}, the authors, based on the discontinuous Petrov--Galerkin framework, proposed a CEM-GMsFEM stabilization technique.
Here, the eigenvalue problem is associated with the convection field \(\bm{b}\) and is symmetrized, leading that all eigenvalues are real-valued.
The Galerkin discretization with CEM-GMsFEMs for convection-diffusion problems is established in \cite{Zhao2023}, and an extension to inhomogeneous boundary conditions is presented in \cite{Wong2024}.
For Stokes flow problems, which exhibit a saddle-point structure, the convergence of CEM-GMsFEMs is provided in \cite{Chung2021a}; in this case, the eigenvalue problem arises from the vector Laplace operator and the oversampling problem is solved in the divergence-free space. {For Helmholtz problems with the first-order absorbing boundary condition,
find $u : D \to \mathbb{C}$ such that
\begin{equation}\label{eq:helm}
\begin{cases}
- \text{div} (\kappa \nabla u) - k^2 u = f 
& \text{in } D, \\[4pt]
\kappa \nabla u \cdot \vec{n} - i k u = 0 
& \text{on } \partial D.
\end{cases}
\end{equation}
the wave number term $k$ introduces indefiniteness, and the boundary condition renders the operator non-Hermitian.} 
{The CEM-GMsFEM for Helmholtz problems \cref{eq:helm} was proposed in \cite{Jin2024}, where the local eigenvalue problem is derived from an energy functional incorporating the wave number. The central idea is to construct multiscale basis functions that capture the oscillatory behavior of high-frequency waves through carefully designed local spectral problems and constraint energy minimization. A Petrov–Galerkin discretization is employed for the multiscale formulation, in which both trial and test basis functions are constructed via oversampling. This strategy enhances stability for indefinite Helmholtz problems and improves approximation properties by incorporating additional local information beyond each coarse element. Inherited from the CEM-GMsFEM framework, the proposed method achieves a significant reduction in CPU time and DOFs compared to the standard finite element method, while maintaining comparable accuracy. Moreover, the method has been successfully applied to irregular domains and more complex geometries. These additional numerical experiments further confirm the robustness and efficiency of the approach beyond standard rectangular benchmark problems.} Another interesting challenge arises when the coefficient can be both positive and negative,referred to as sign-changing problems, which find important applications in metamaterial simulations. {In \cite{Ye2024a,chung2025multiscale}, the local eigenvalue problem is again formulated based on the energy bilinear form, but with the coefficient replaced by its absolute value in \cref{eq:eigenvalue}.  By using the absolute value of the coefficient in the eigenvalue problem, one obtains a stable auxiliary space that captures the essential multiscale features of the medium. Under suitable technical assumptions and by employing the \texttt{T}-coercivity theory \cite{Bon2010}, the inf-sup stability of the method is rigorously established, together with corresponding a priori error estimates. In contrast, the subsequent oversampling problem is solved using the original sign-changing coefficient. This design allows the constructed multiscale basis functions to retain the correct physical properties of the underlying problem while benefiting from the stability provided by the auxiliary spectral problem. The combination of a coercive eigenvalue formulation and a physically consistent oversampling stage ensures both robustness and accuracy for problems with sign-changing coefficients. }
{\subsection{H(curl)-problems}
The CEM-GMsFEM framework has recently been extended to Maxwell-type problems involving the double-curl operator, which are known for their non-elliptic structure and lack of coercivity. The problem is to find the electric field $\bm{u} \in \mathbb{C}^3$ corresponding to a given current density $\mathbf{f}$ such that the following system holds:
\begin{equation}\label{eq:max}
\begin{cases}
\operatorname{curl}\left(\kappa \operatorname{curl} \mathbf{u}\right) 
- k^2 \mathbf{u} = \mathbf{f}
& \text{in } D, \\[6pt]
\kappa \operatorname{curl} \mathbf{u} \times \mathbf{n} 
- i k \mathbf{u}_T = \mathbf{g}
& \text{on } \partial D.
\end{cases}
\end{equation}
where $k  > 0$ is the (fixed) free-space wavenumber and $\mathbf{u}_T$  denotes the tangential component trace. These intrinsic difficulties often lead to severe numerical instabilities, particularly in high-contrast media and at large wave numbers. In \cite{zhong2026multiscale}, a novel multiscale formulation based on CEM-GMsFEM is proposed for \cref{eq:max} in which the local eigenvalue problems incorporate both a mass term and a Silver–Müller-type boundary penalty. This design restores coercivity at the spectral level and automatically excludes the kernel of the curl operator, thereby eliminating the need to explicitly impose divergence-free constraints on the multiscale basis functions. The multiscale space is then constructed within a Petrov–Galerkin framework using a modified bilinear form. By establishing key norm equivalences under an appropriate resolution condition, the authors rigorously prove the coercivity of the formulation and derive corresponding convergence results. The analysis demonstrates that, provided sufficient oversampling is employed, the method attains first-order convergence independent of local material contrast, which is essential for accurately modeling electromagnetic wave propagation in heterogeneous media \cite{Henning2020max}.  Notably, the design of the local spectral problems represents a substantive departure from existing approaches and addresses challenges that have not been fully resolved in earlier works \cite{ Verfuerth2019, Henning2020max}.}
{
\subsection{Algebraic solver}
The current CEM-GMsFEM framework is closely tied to the underlying numerical discretization and geometric mesh structure, which may limit its broader applicability, particularly in settings where no natural geometric description is available. Developing a fully algebraic framework is therefore desirable, ideally allowing multiscale components to be generated automatically from the given system matrix. In \cite{zhou2026multiscale}, inspired by the CEM-GMsFEM philosophy, the authors propose a purely algebraic multiscale method for highly heterogeneous spatial network models that avoids the introduction of geometric parameters such as mesh sizes or Dirichlet nodes. The authors defined a special network $\mathcal{G}=(\mathcal{N},\mathcal{E})$ where $\mathcal{N}$ and $\mathcal{E}$ denote the sets of nodes and edges based on the \cref{eq:model}. Two symmetric positive semi-definite matrices are introduced: 
$L$, representing a weighted graph Laplacian with possibly high-contrast coefficients, 
and $M$, representing mass properties. Solving \cref{eq:model} is equivalent to seeking $u$ in the space $V$, 
consisting of all real-valued functions defined on the nodes $\mathcal{N}$, 
such that
\begin{equation*}
\big( (L + M) u , v \big)
=
(f, v)
\quad \forall v \in V,
\end{equation*}
where $(\cdot,\cdot)$ denotes the Euclidean inner product on $V$. The central idea is to construct localized multiscale basis functions via graph-based spectral decompositions combined with oversampling layers, together with a subgraph-wise estimate that defines a geometry-independent Poincaré constant. This formulation achieves convergence independent of local heterogeneity contrast while preserving the intrinsic algebraic structure of the network system. Rigorous theoretical analysis establishes stability and contrast-robust error estimates, and numerical experiments demonstrate substantial reductions in degrees of freedom without sacrificing accuracy. Extension to a learning-accelerated algebraic multilevel preconditioner can be found in \cite{LIU2026114950}}.
\subsection{Other applications}
Actually, extending the original CEM-GMsFEMs to inhomogeneous boundary conditions is not straightforward, as the convergence of CEM-GMsFEMs relies on the \(L^2\) right-hand source term.
For generalized boundary conditions, we cannot transform the problem into a variational form with an \(L^2\) right-hand side.
In \cite{Ye2023a}, the authors proposed several boundary correctors to mitigate this issue, and the computations for these correctors can also be localized.
The framework is further extended to elasticity in \cite{Wang2024}.
For nonlinear boundary conditions involving variational inequalities, the article \cite{Li2025} introduces an iterative scheme in combination with semi-smooth Newton methods.
Finally, Xie et al.\ \cite{Xie2024} presented an application of CEM-GMsFEMs to perforated domains, featuring numerical experiments on unstructured meshes.
{\section{Numerical illustration of the decay property}
\label{sec:num}}
The effectiveness of the CEM-GMsFEM relies on the rapid decay of the multiscale basis functions with respect to \( l \), the number of oversampling layers, as established in \cref{lem:step3}. It is necessary to numerically visualize the exponential decay behavior of the multiscale basis functions obtained in \cref{sec:keys}. A square domain is considered with a \(10 \times 10\) periodic high-contrast coefficient \( \kappa \) in \cref{eq:model}, as shown in subplot (a) of \cref{fig:eigen-0d1+1d0}. In each periodic cell, a centered square inclusion is assigned the coefficient value \( \kappa = 1 \), whereas the background coefficient is \( \kappa = 10^3 \). The coarse mesh $\mathcal{T}_H$ is aligned with the periodic structure. The numerical experiments are performed on the square domain $\Omega = (0,1) \times (0,1)$ with mesh sizes $(H,h) = (1/10, 1/400)$, and $k = 3$ eigenfunctions are used in each coarse subdomain $K_j\in\mathcal{T}_H$.

The selected coarse element highlighted in red in subplot (a) of  \cref{fig:eigen-0d1+1d0} and plot the first three eigenfunctions \( \Psi_{j,1} \), \( \Psi_{j,2} \), and \( \Psi_{j,3} \), computed from \cref{{eq:eigenvalue}}, in subplots (b)–(d). Due to the use of homogeneous Neumann boundary conditions, the first eigenvalue is always zero, and the corresponding eigenfunction represents the constant mode associated with the kernel of the local operator. The higher-order eigenfunctions exhibit localized oscillatory structures, illustrating the spectral separation that underpins the rapid decay of the oversampled multiscale basis functions.

\begin{figure}[!ht]
  \includegraphics[width=\textwidth]{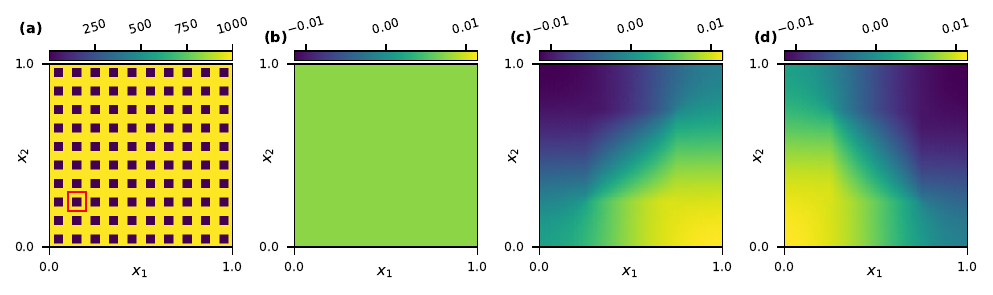}
  \caption{
    \SubplotTag{(a)} The coefficient profile and the marked coarse element.
    \SubplotTag{(b)}--\SubplotTag{(d)} The plot of the first/second/third eigenfunction corresponding to the marked coarse element.
  }\label{fig:eigen-0d1+1d0}
\end{figure}
The decay of the multiscale basis functions \( \Phi_{j,1} \), \( \Phi_{j,2} \), and \( \Phi_{j,3} \), obtained from \cref{eq:G-j-l} with different numbers of oversampling layers, is illustrated in \cref{fig:ms-0d1+1d0}. In this figure, the first, second, and third rows correspond to the first, second, and third eigenfunctions, respectively, while the first, second, and third columns display the multiscale basis functions computed with \( l = 1 \), \( 2 \), and \( 3 \), respectively. The location of the selected coarse element determines the maximum admissible number of oversampling layers, which is \( l_{\max} = 8 \) in this case. We therefore compute the relative differences in both the energy norm and the \( L^2 \)-norm of the multiscale basis functions between the reference solution with \( l = 8 \) and those with \( l = 1, \dots, 7 \). The results are shown in the fourth column of \cref{fig:ms-0d1+1d0}. From the plots of the multiscale basis functions, we observe that they decay rapidly away from the selected coarse element. Consequently, accurate approximations can be achieved using only a small number of oversampling layers \cite{Chung2018}.
\begin{figure}[!ht]
  \includegraphics[width=\textwidth]{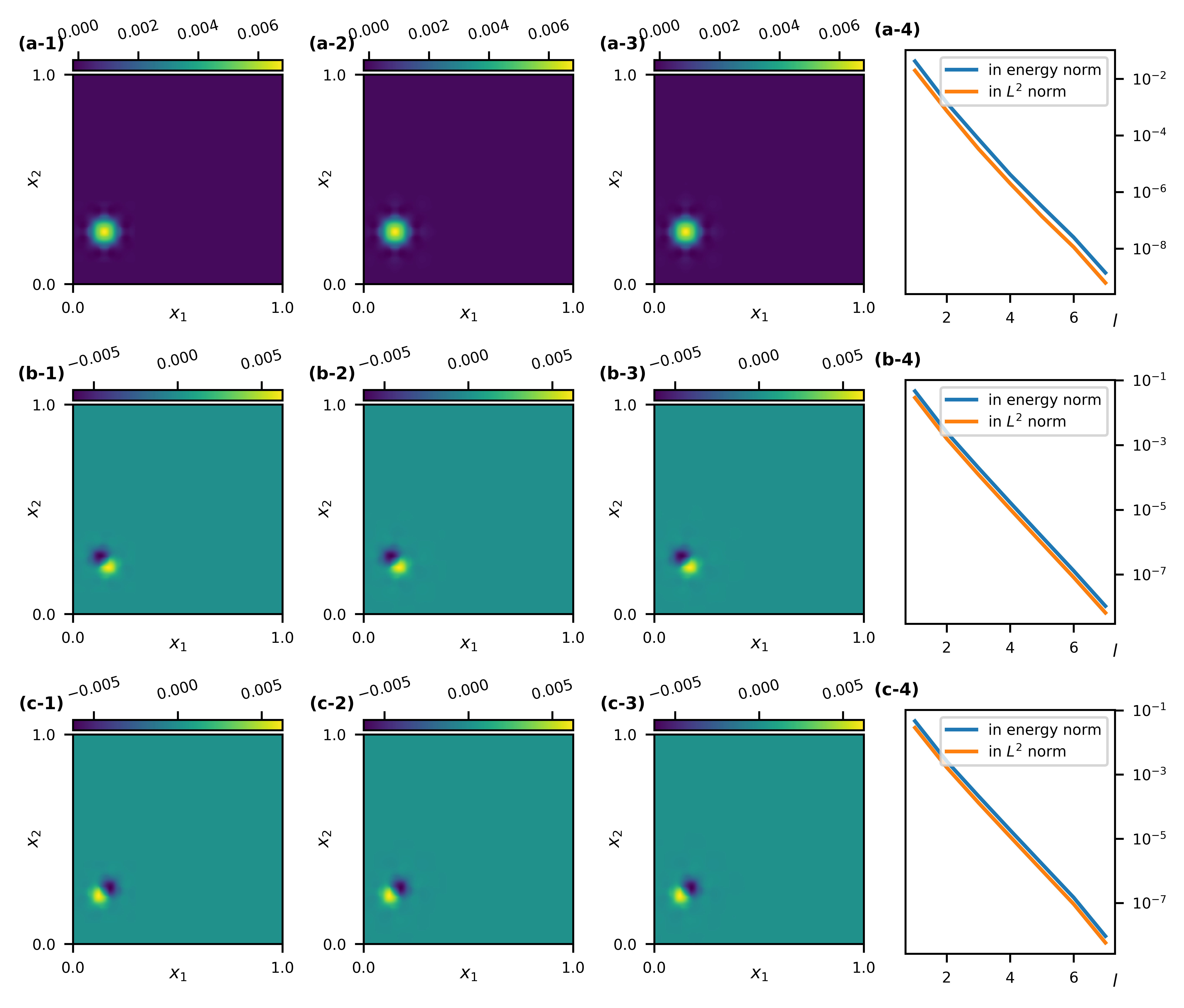}
  \caption{
    The subplots are marked as \SubplotTag{(x-y)}, where \SubplotTag{x} can take \SubplotTag{a}, \SubplotTag{b}, or \SubplotTag{c}, corresponding to the results for the first, second, or third eigenfunction, respectively.
    If \SubplotTag{y} is \SubplotTag{1}, \SubplotTag{2}, or \SubplotTag{3}, the subplot displays the multiscale basis with $l$ oversampling layers, $l$ equal to \SubplotTag{y}.
    Alternatively, if \SubplotTag{y} is \SubplotTag{4}, the subplot shows the relative differences (y-axis) in the energy and $L^2$ norm of the multiscale bases between $l=8$ and $l=1,\dots,7$ (x-axis).
  }\label{fig:ms-0d1+1d0}
\end{figure}
\section{Conclusion and outlook}
\label{sec:conclusion}
We have reviewed the CEM-GMsFEM framework, a powerful tool for solving elliptic problems with highly heterogeneous coefficients.
We have discussed the construction of multiscale spaces, the convergence properties of CEM-GMsFEMs, and the error estimates for localized multiscale spaces.
Additionally, we have presented recent applications of CEM-GMsFEMs, including elasticity problems, alternative discretizations, dynamic problems, non-local multi-continua upscaling, indefinite problems, and more.

We believe the following topics merit further investigation:
\begin{itemize}
  \item \textbf{Computing efficiency}: Constructing multiscale bases (the offline stage) is computationally expensive.
        Although the potential for parallelization is well recognized, practical implementations remain scarce and challenging.
        Furthermore, the resulting linear system is much denser than that arising from standard finite element discretizations, and efficient solvers beyond direct methods remain an open research question.

 \item {\textbf{Neural operator}: Existing neural operator models often exhibit performance degradation in strongly heterogeneous or high-contrast settings. In \cite{rudikov2026locally}, a GMsFEM-based neural operator (GMsFEM-NO) was proposed to enhance robustness in multiscale problems. Since the current CEM-GMsFEM framework inherits the stability and contrast-robust properties of GMsFEM, it provides a natural foundation for integration with operator learning techniques. This combination offers a promising direction for developing reliable neural operators tailored to high-contrast and nonlinear multiscale problems.}


  \item \textbf{Homogenization and non-locality}: Although NLMC upscaling provides insight into how non-locality arises from multiscale methods, its physical interpretation remains unclear.
        Investigating this phenomenon from the perspective of \emph{generalized continuum theories} is worthwhile. Mechanical examples, particularly those related to fiber-reinforced composites and mechanical lattices, may serve as ideal test cases for such studies.

  \item \textbf{Nonlinear model reduction}: The current rigorous error estimates rely on the linearity of the problem.
        For nonlinear problems, several stringent assumptions are required, and updating multiscale bases at each iteration incurs high computational costs.
        Developing efficient strategies for nonlinear model reduction is a common challenge across science and engineering and represents an exciting direction for further research within the CEM-GMsFEM framework.
\end{itemize}

\section*{Acknowledgments}
EC's research is partially supported by the Hong Kong RGC General Research Fund (Project numbers: 14305624 and 14304525). The authors would like to thank the kind hospitality of the Institute for Mathematical and Statistical Innovation during the workshop Reduced-Order Modeling for Complex Engineering Problems: From Analysis to Practical Implementation.

\bibliographystyle{siamplain}
\bibliography{refs.bib}

\end{document}